\numberwithin{equation}{section}
\theoremstyle{plain}
\newtheorem{theorem}{Theorem}[section]
\newtheorem{lemma}[theorem]{Lemma}
\newtheorem{proposition}[theorem]{Proposition}
\newtheorem{corollary}[theorem]{Corollary}
\newtheorem{thm}[theorem]{Theorem}
\newtheorem{cor}[theorem]{Corollary}
\theoremstyle{definition}
\newtheorem{remark}[theorem]{Remark}
\newtheorem{rem}[theorem]{Remark}
\DeclareMathOperator{\Isom}{{\mathrm Isom}}
\DeclareMathOperator{\Hull}{{Hull}}
\def\H{\mathbb H}
\def\Z{\mathbb Z}
\def\TT{\mathcal{T}_\varepsilon}
\def\TH{\widehat{\mathcal{T}}_\varepsilon}
\def\F{\mathbb F}
\newcommand{\nbhd}[2]{\bar{N}_{#1}\left({#2}\right)}
\newcommand{\geo}{\partial X}
\DeclareFontFamily{U}{tipa}{}
\DeclareFontShape{U}{tipa}{m}{n}{<->tipa10}{}
\newcommand{\arc@char}{{\usefont{U}{tipa}{m}{n}\symbol{62}}}%
\newcommand{\arc}[1]{\mathpalette\arc@arc{#1}}
\newcommand{\arc@arc}[2]{
  \sbox0{$\m@th#1#2$}
  \vbox{
    \hbox{\resizebox{\wd0}{\height}{\arc@char}}
    \nointerlineskip
    \box0
  }
}
\def\geo{\partial_{\infty}}
\def\Ga{\Gamma}
\def\al{\alpha}
\def\la{\lambda}
\def\ve{\varepsilon}
\def\dl{\delta}
\newcommand{\Bis}{\mathrm{Bis}}
\title{Ping-pong in Hadamard manifolds}
\author{Subhadip Dey}
\address{Department of Mathematics, UC Davis, One Shields Avenue, Davis, CA 95616, USA}
\email{sdey@math.ucdavis.edu}
\author{Michael Kapovich}
\address{Department of Mathematics, UC Davis, One Shields Avenue, Davis, CA 95616, USA}
\email{kapovich@math.ucdavis.edu}
\author{Beibei Liu}
\address{Department of Mathematics, UC Davis, One Shields Avenue, Davis, CA 95616, USA}
\email{bxliu@math.ucdavis.edu}
\date{\today}
\begin{document}
\maketitle

\begin{abstract}
In this paper, we prove a quantitative version of the Tits alternative for negatively pinched manifolds $X$. Precisely, we prove that a nonelementary discrete isometry subgroup of $\Isom(X)$ generated by two non-elliptic isometries $g,  f$ contains a free subgroup of rank 2 generated by isometries $f^N, h$ of uniformly bounded word length. Furthermore, we show that this free subgroup is convex-cocompact when $f$ is hyperbolic. 
\end{abstract}

\section{Introduction}
Let $X$ be an $n$-dimensional negatively curved Hadamard manifold, with sectional curvature ranging between $-\kappa^2$ and $1$, for some $\kappa \ge 1$. The main result of this note is the following quantitative version of the Tits alternative for $X$, which answers a question asked by Filippo  Cerocchi during the Oberwolfach Workshop ``Differentialgeometrie im Grossen'', 2017,  see also \cite{CS}: 

\begin{theorem}\label{thm:main}
There exists a function $\mathcal{L}=\mathcal{L}(n,\kappa)$ such that the following holds: 
Let $f, g$ be non-elliptic isometries of $X$ generating a nonelementary discrete subgroup $\Gamma$ of $\Isom(X)$. Then there exists an element $h\in \Gamma$ whose word length (with respect to the generators $f, g$) is $\le \mathcal{L}$ and a number $N\le \mathcal{L}$ such that the subgroup of $\Gamma$ generated by $f^N, h$ is free of rank two.  
\end{theorem}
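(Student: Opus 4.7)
The plan is to run a quantitative ping-pong argument, with all constants controlled by the Margulis lemma in pinched negative curvature. Recall there is a Margulis constant $\varepsilon_0 = \varepsilon_0(n, \kappa) > 0$ such that, for any discrete $\Gamma \le \Isom(X)$ and any $x \in X$, the subgroup generated by $\{\gamma \in \Gamma : d(x, \gamma x) < \varepsilon_0\}$ is virtually nilpotent, hence elementary. Since $\Gamma = \langle f, g \rangle$ is nonelementary, at every $x \in X$ at least one of $f, g$ displaces $x$ by at least $\varepsilon_0$. I would split according to whether $f$ is hyperbolic or parabolic, and treat the hyperbolic case first (which is where convex-cocompactness appears).

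Assume $f$ is hyperbolic, with axis $A_f$ and endpoints $f^\pm \in \geo X$. My goal is to produce a second hyperbolic element $h$, as a word in $\{f, g\}$ of length $\le \mathcal{L}(n, \kappa)$, whose fixed points $h^\pm$ lie at definite visual distance from $\{f^+, f^-\}$. I plan to construct $h = k f^{M_1} k^{-1}$ for a bounded $M_1 = M_1(n, \kappa)$ and a short conjugator $k \in \Gamma$ of length $\le L_0(n, \kappa)$, so that $h^\pm = k \cdot f^\pm$ and $\ell(h) = M_1 \cdot \ell(f)$. With $f^N$ and $h$ in hand, the argument is the classical boundary ping-pong lemma: choose disjoint visual neighborhoods $U_\pm$ of $f^\pm$ and $V_\pm$ of $h^\pm$, of diameters calibrated to the translation lengths of $f^N$ and $h$, and verify the contraction inequalities using the pinched negative curvature estimates on boundary dynamics. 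The resulting Schottky-type dynamics yields freeness; and because the four neighborhoods have uniformly positive gaps, the limit set of $\langle f^N, h \rangle$ is a Cantor set in the disjoint union $U_+ \cup U_- \cup V_+ \cup V_-$, giving convex-cocompactness.

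The crux is constructing the conjugator $k$: we need a bounded-length word in $\{f, g\}$ that moves $\{f^+, f^-\}$ by a definite visual amount. I would argue by contradiction. Suppose every word $k$ of length $\le L_0$ in $\{f, g\}$ maps $\{f^+, f^-\}$ to within visual distance $\delta_0$ of itself. Then at a suitable basepoint on $A_f$, each such $k$ almost commutes with a bounded power of $f$; propagating this via a Margulis-style iteration in the generators would force $\Gamma$ itself to almost preserve $\{f^+, f^-\}$, contradicting nonelementarity. The parabolic case of $f$ is handled analogously: neighborhoods of $f^\pm$ are replaced by a large horoball based at the parabolic fixed point $\xi_0$ of $f$, and ping-pong happens between two small windows on the horosphere of $\xi_0$. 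A secondary technical point in both cases is the regime of very small translation length $\ell(f)$: there $f^N$ contracts slowly, so the neighborhoods $U_\pm$ must be enlarged, and one must use that the Margulis condition then forces $g$ (or a short word in $f, g$) to displace along $A_f$ substantially, to keep the ping-pong geometry consistent. I expect this intertwining of the Margulis lemma with word-length bookkeeping in the generating set to be the main technical ingredient and the principal obstacle of the proof.
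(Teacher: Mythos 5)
There is a genuine gap, and it sits exactly where you locate "the main technical ingredient": the small translation length regime. Your plan makes $f^N$ (with $N\le\mathcal{L}(n,\kappa)$ bounded) one of the two Schottky generators and runs boundary ping-pong between visual neighborhoods $U_\pm$ of $f^\pm$ and $V_\pm$ of $h^\pm$. But if $f$ is hyperbolic with $\tau(f)$ arbitrarily small (or parabolic), no uniformly bounded power $f^N$ has north--south dynamics strong enough to map the complement of $U_-$ into $U_+$ for neighborhoods with a uniformly positive gap: the required contraction scales with $N\tau(f)$, which you cannot make uniformly large. Enlarging $U_\pm$ does not help, since they must stay disjoint from each other and from $V_\pm$. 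The paper avoids this by not doing ping-pong at the boundary in this regime at all: when $\tau(f)\le\varepsilon(n,\kappa)/10$ it takes $N=1$ and plays ping-pong in $X$ between the convex hulls of the Margulis regions $\mathcal{T}_\varepsilon(f)=\bigcup_i T_\varepsilon(f^i)$ and $\mathcal{T}_\varepsilon(h^i f h^{-i})$; the key point is that \emph{outside} its Margulis region, \emph{every} nonzero power of $f$ displaces by at least $\varepsilon/10$ simultaneously, which is the uniform displacement your single power $f^N$ cannot supply. Freeness then follows from a local-to-global principle for piecewise geodesics with long middle segments and angles $\ge\pi/2$, not from boundary contraction.

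Your second unproved step, the bounded-length conjugator $k$ moving $\{f^+,f^-\}$ by a definite visual amount via "a Margulis-style iteration," is replaced in the paper by a much more concrete counting argument: among the conjugates $g_i=h^i g h^{-i}$ for $1\le i\le \mathfrak{k}(L,\varepsilon)$, two must have Margulis regions at distance $>L$, because otherwise one packs too many disjoint $\varepsilon/3$-balls (one inside each region, all meeting a fixed ball of controlled radius) into a ball of bounded volume. This is a Helly-plus-volume argument with explicit constants, and it is where the uniformity in $(n,\kappa)$ actually comes from. Finally, note the paper's case division is by translation length (above or below $\varepsilon(n,\kappa)/10$) after first normalizing $g\mapsto gfg^{-1}$ so that $f,g$ are conjugate, not by isometry type; in the large-translation-length case the ping-pong is the classical half-space (Dirichlet domain) argument, with the fellow-traveling length of the two axes bounded by applying the Margulis lemma to commutators. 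Your outline is a reasonable heuristic for the generic hyperbolic case, but as written the two central steps are asserted rather than proved, and the first one fails as stated.
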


One can regard this theorem as a quantitative version of the Tits alternative for discrete subgroups of $\Isom(X)$. For other forms of the quantitative Tits alternative we refer to \cite{BCG, B, BF, BG}.

\medskip 
After replacing $g$ with the element $g':=gfg^{-1}$, and noticing that the subgroup generated by $f, g'$ is still discrete and nonelementary, we reduce the problem to the case when the isometries $f$ and $g$ are conjugate in $\Isom(X)$ which we will assume from now on. 

\medskip 
The proof of  Theorem \ref{thm:main} breaks into two cases which are handled by different arguments:

{\bf Case 1.} $f$ (and, hence, $g$) has translation length bounded below by some positive number $\la$. We discuss this case in Section \ref{sec:Case1}. 

{\bf Case 2.} $f$ has translation length bounded above by some positive number $\la$. We discuss this case in Section \ref{sec:Case2}. 

\begin{remark}
1. For the constant $\la$ we will  take $\varepsilon(n,\kappa)/10$, where 
$\varepsilon(n, \kappa)$ is a positive lower bound for the Margulis constant of $X$. 

2. We need to use a power of $f$ only in Case 1, while in Case 2 we can take $N=1$. 
\end{remark}

We also note that if $f$ is  hyperbolic, the free group $\langle f^N, h \rangle$ constructed in our proof is convex-cocompact. See Proposition \ref{prop:gf-hyp} and Corollary \ref{schottky group}. 
One can also show that this subgroup is geometrically finite if $f$ is parabolic but we will not prove it. 

\medskip 
{\bf Acknowledgements.} The second author was partly supported by the NSF grant  DMS-16-04241 as well as by KIAS (the Korea Institute for Advanced Study) through the KIAS scholar program and by the Simons Foundation Fellowship, grant number 391602. The third author was partly supported by the NSF grant DMS-17-00814.

\section{Definitions and notation}

In a metric space $(Y, d)$, we will be using the notation $B(a,R)$ to denote the open $R$-ball  centered at $a\in Y$, and the notation $\bar{N}_R(A)$ to denote the closed $R$-neighborhood of a subset $A\subset X$. By
$$
d(A,B):= \inf \{d(a,b): a\in A, b\in B\}
$$
we denote the minimal distance between subsets $A, B\subset Y$. 

 \medskip
If $(Y,d)$ is a geodesic $\delta$-hyperbolic metric space or a $\mathrm{CAT}(0)$ space, then $\geo Y$ will denote the visual boundary equipped with the visual topology, and we write $\bar{Y}:= Y\cup \geo Y$. If $Y$ is proper then $\bar{Y}$ is a compactification of $Y$.  
Given a pair of points $x, y$ in $(Y,d)$, we will use the notation $xy$ to denote a geodesic segment 
in $Y$ connecting $x$ to $y$.  For general $\delta$-hyperbolic spaces this segment is \emph{not} unique, but, since 
any two such segments are within distance $\delta$ from each other, this abuse of notation is harmless. We  let $|xy|=d(x,y)$ denote the length of $xy$.  Given points $A, B, C\in Y$, we let $\triangle ABC$ denote a geodesic triangle in $Y$ with the vertices $A, B, C$. 
Similarly, if  $y\in Y, \xi\in \geo Y$, then  $y\xi$ will denote a geodesic ray emanating from $y$ and asymptotic to $\xi$. 

A subset $A$ of $Y$ is called $\la$-{\em quasiconvex} if every geodesic segment $xy$ with the end-points in $A$ is contained in $\bar{N}_\la(A)$. 

\medskip
A subset $A$ in a metric space $Y$ is called  \emph{starlike} with respect to a point $a\in A$ if for every $y\in A$ every geodesic segment $ya$ is contained in $A$. More generally, if $Y$ is $\delta$-hyperbolic or a $\mathrm{CAT}(0)$ space then $A\subset Y$ is 
called  \emph{starlike} with respect to a point $\xi\in \geo Y$ if for every $y\in A$ every geodesic ray $y\xi$ is contained in $A$. 

\medskip
Throughout the paper, $X$ will denote an $n$-dimensional  Hadamard manifold with sectional curvature ranging between $-\kappa^{2}$ and $-1$, unless otherwise stated. Let $d$ denote the Riemannian distance function on $X$. 
 We use $\geo X$ to denote the visual boundary of $X$, and $\bar{X}:= X\cup \geo X$ the visual compactification of $X$. Let $\Isom(X)$ denote the isometry group of $X$. We use $\varepsilon(n, \kappa)$ to denote a fixed positive lower bound on 
 the Margulis constant for $X$; this number is known to depend only on $n$ and $\kappa$, see e.g. \cite{BGS}.

Given a pair of points $p, q$ in $X$ we let $H(p,q)$ denote the closed half-space in $X$ given by
$$
H(p,q) = \{x\in X: d(x, p)\le d(x,q)\}. 
$$
Then $\Bis(p,q)=\Bis(q,p):= H(p,q)\cap H(q,p)$, is the equidistant set of $p, q$.  

We use the notation $\Hull(A)$ for the {closed convex hull} of a subset $A\subset X$ which is the intersection of all closed convex subsets of $X$ containing $A$.  

For each isometry $g$ of $X$ we define its {\em translation length} $\tau(g)$ as
\begin{equation}\label{eq:tr-length}
\tau(g) = \inf_{x\in X} d(x, g(x)).
\end{equation}
Isometries of $X$ are classified in terms of their translation lengths, see Section \ref{sec:cl-isom}.

A discrete  subgroup $\Gamma< \Isom(X)$ is called \emph{elementary} if  either it fixes a point in $\bar X$ or preserves a geodesic in $X$.

\section{Preliminary material}

\subsection{Some $\mathrm{CAT}(-1)$ computations}

Let $X$ be a $\mathrm{CAT}(-1)$ space. Recall that the hyperbolicity constant of $X$ is $\le \delta= \cosh^{-1}(\sqrt{2})$. 

\begin{lemma}\label{lem:RA-triangle}
Let $\triangle A_1A_2C$ be a triangle in  $X$ such that  $\angle A_1 C A_2\ge \pi/2$. 
Then 
$$
|A_1A_2| \ge  |A_1C| + |A_2C| -  2\delta. 
$$
\end{lemma}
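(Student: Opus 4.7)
The plan is to reduce to the model plane $\H^2$ via CAT$(-1)$ comparison and then compute using the hyperbolic law of cosines.

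First, I would form the comparison triangle $\triangle \tilde A_1 \tilde A_2 \tilde C$ in $\H^2$ with the same side lengths as $\triangle A_1 A_2 C$. The CAT$(-1)$ hypothesis implies that the angle at the vertex $\tilde C$ dominates the Alexandrov angle at $C$, so
$$\angle \tilde A_1 \tilde C \tilde A_2 \;\ge\; \angle A_1 C A_2 \;\ge\; \pi/2.$$

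Next, I would apply the hyperbolic law of cosines to $\triangle \tilde A_1 \tilde A_2 \tilde C$:
$$\cosh|A_1 A_2| = \cosh|A_1 C|\cosh|A_2 C| - \sinh|A_1 C|\sinh|A_2 C|\cos(\angle \tilde A_1 \tilde C \tilde A_2).$$
Because the cosine term is non-positive, this collapses to
$$\cosh|A_1 A_2| \;\ge\; \cosh|A_1 C|\cosh|A_2 C|.$$

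Finally, I would convert this multiplicative inequality into the desired additive bound using the elementary estimates $\cosh(x) \le e^x$ and $\cosh(x) \ge e^x/2$ for $x \ge 0$:
$$e^{|A_1 A_2|} \;\ge\; \cosh|A_1 A_2| \;\ge\; \cosh|A_1 C|\cosh|A_2 C| \;\ge\; \tfrac{1}{4}\,e^{|A_1 C|+|A_2 C|},$$
which gives $|A_1 A_2| \ge |A_1 C| + |A_2 C| - 2\log 2$. Since $\log 2 < \log(1+\sqrt 2) = \cosh^{-1}(\sqrt 2) = \delta$, this is in fact a (very slightly) stronger estimate than the one stated.

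I do not expect any real obstacle here; this is a short, standard CAT$(-1)$ computation. The only subtle point is to take the angle comparison in the correct direction (Alexandrov angles in $X$ are \emph{dominated} by the corresponding angles of the comparison triangle in $\H^2$), which is precisely what places the cosine factor on the favourable side in the hyperbolic law of cosines.
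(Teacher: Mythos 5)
Your proof is correct, but it takes a different route from the paper's. The paper drops the nearest-point projection $D$ of $C$ onto the side $A_1A_2$, observes that one of the angles $\angle A_iCD$ is at least $\pi/4$, and uses the $\mathrm{CAT}(-1)$ comparison together with the dual (angle) cosine law to bound $|CD|\le \cosh^{-1}(\sqrt2)=\delta$; the additive inequality then follows from two triangle inequalities, which is where the constant $2\delta$ comes from. You instead pass to the comparison triangle in $\H^2$, use the (correctly oriented) angle comparison $\angle \tilde A_1\tilde C\tilde A_2\ge \angle A_1CA_2\ge \pi/2$ to kill the cross term in the side cosine law, and convert $\cosh|A_1A_2|\ge \cosh|A_1C|\cosh|A_2C|$ into an additive bound via $e^x/2\le\cosh x\le e^x$. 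Each step is valid, and your argument even yields the slightly sharper constant $2\log 2<2\delta$ (so the stated inequality follows a fortiori). The paper's approach has the side benefit of exhibiting the geometric fact that $C$ lies within distance $\delta$ of the opposite side, which is in the spirit of the thin-triangles constant used elsewhere in the paper; yours is shorter and more computational. One tiny point worth a phrase in a final write-up: the angle $\angle A_1CA_2$ in a general $\mathrm{CAT}(-1)$ space is the Alexandrov angle, and the comparison inequality you invoke is exactly the standard statement that Alexandrov angles are dominated by the corresponding comparison angles in the model plane, so your application is legitimate.
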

\proof Let $D\in A_1A_2$ be the point closest to $C$. Then at least one of the angles $\angle A_iCD, i=1, 2$ is $\ge \pi/4$. 
The $\mathrm{CAT}(-1)$ property and the dual cosine law for the hyperbolic plane imply that
$$
\cosh(|CD|) \sin(\frac{\pi}{4})\le 1,
$$
i.e.
$$
|CD|\le \cosh^{-1}(\sqrt{2})=\delta. 
$$
The rest follows from the triangle inequalities. \qed

\begin{cor}\label{cor:2bisectors}
Suppose that $x, x_+, \hat{x}_+, x'_+$ are points  in $X$  which lie on a common geodesic and appear on 
this geodesic in the given order. Assume that 
$$
d(\hat{x}_+, x'_+)\ge d(x, x_+) + 2\cosh^{-1}(\sqrt{2}). 
$$
Then $H(x_+, \hat{x}_+)\subset H(x, x'_+)$. 
\end{cor}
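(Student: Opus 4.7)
The plan is to show that any $y$ with $d(y, x_+) \le d(y, \hat x_+)$ satisfies $d(y,x) \le d(y, x'_+)$, by producing matching estimates: an upper bound on $d(y,x)$ and a lower bound on $d(y,x'_+)$. The upper bound is the immediate triangle inequality
$$d(y, x) \le d(y, x_+) + d(x_+, x) \le d(y, \hat x_+) + d(x_+, x),$$
using the defining condition of $H(x_+, \hat x_+)$. The real content is the lower bound
$$d(y, x'_+) \ge d(y, \hat x_+) + d(\hat x_+, x'_+) - 2\delta,$$
which I will extract from Lemma \ref{lem:RA-triangle} applied to $\triangle y \hat x_+ x'_+$. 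Combining these two estimates with the hypothesis $d(\hat x_+, x'_+) \ge d(x, x_+) + 2\delta$ makes everything cancel and yields $d(y, x) \le d(y, x'_+)$ in one line.

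To invoke Lemma \ref{lem:RA-triangle} on $\triangle y \hat x_+ x'_+$ the angle at $\hat x_+$ must be at least $\pi/2$. Since $\hat x_+$ lies on a geodesic between $x_+$ and $x'_+$, the triangle inequality for Alexandrov angles in the $\mathrm{CAT}(0)$ space $X$ gives
$$\angle(x_+, \hat x_+, y) + \angle(y, \hat x_+, x'_+) \ge \pi,$$
so it suffices to show $\angle(x_+, \hat x_+, y) \le \pi/2$. For this I would compare the triangle $\triangle y \hat x_+ x_+$ with its hyperbolic model triangle $\triangle \bar y \bar{\hat x}_+ \bar x_+$: the hypothesis $d(y, x_+) \le d(y, \hat x_+)$ passes to the comparison triangle, and by the hyperbolic Pythagorean theorem the foot of the perpendicular from $\bar y$ to the geodesic through $\bar x_+, \bar{\hat x}_+$ lies on the $\bar x_+$ side of the midpoint of $[\bar x_+, \bar{\hat x}_+]$, hence not beyond $\bar{\hat x}_+$. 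Therefore the comparison angle at $\bar{\hat x}_+$ is at most $\pi/2$, and the $\mathrm{CAT}(-1)$ angle comparison transfers this bound to $X$.

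The main obstacle is the angle step: one has to get the direction of the $\mathrm{CAT}(-1)$ comparison right (angles in $X$ are bounded \emph{above} by model angles, not below), and correctly identify which of the two supplementary angles at $\hat x_+$ is the one naturally controlled by the closeness hypothesis. Once $\angle(x_+, \hat x_+, y) \le \pi/2$ is in hand, the remainder is a direct chain of triangle inequalities in which the $2\delta$ slack coming out of Lemma \ref{lem:RA-triangle} is exactly absorbed by the $2\cosh^{-1}(\sqrt{2})$ slack built into the hypothesis, so there is no room to lose and no further estimates are needed.
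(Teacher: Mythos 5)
Your argument is correct, and the final cancellation is exactly the one the paper uses: Lemma \ref{lem:RA-triangle} supplies the lower bound $d(y,x'_+)\ge d(y,\hat x_+)+d(\hat x_+,x'_+)-2\delta$, the plain triangle inequality supplies the upper bound on $d(y,x)$, and the $2\delta$ slack in the hypothesis absorbs the defect. The difference is in how you arrange to have the angle $\angle y\,\hat x_+\,x'_+\ge\pi/2$. The paper first proves the containment only for points $z\in\Bis(x_+,\hat x_+)$, where the equidistance $d(z,x_+)=d(z,\hat x_+)$ makes both angles $\angle z x_+\hat x_+$ and $\angle z\hat x_+ x_+$ at most $\pi/2$ essentially for free (the smaller base angle of an isosceles comparison triangle), and then upgrades $\Bis(x_+,\hat x_+)\subset H(x,x'_+)$ to $H(x_+,\hat x_+)\subset H(x,x'_+)$ by observing that any geodesic from $w\in H(x_+,\hat x_+)$ to $x'_+$ must cross the bisector at some $z$, whence $d(w,x'_+)=d(w,z)+d(z,x'_+)\ge d(w,z)+d(z,x)\ge d(w,x)$. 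You instead work with an arbitrary $y\in H(x_+,\hat x_+)$ from the start, which forces you to extract $\angle(x_+,\hat x_+,y)\le\pi/2$ from the weaker one-sided inequality $d(y,x_+)\le d(y,\hat x_+)$ via a comparison triangle and the correct (upper-bound) direction of the $\mathrm{CAT}(-1)$ angle comparison, followed by the supplementary-angle inequality at the interior point $\hat x_+$. Your route trades the paper's geodesic-crossing step for this comparison-angle estimate; it is marginally more self-contained (no appeal to the intermediate-value crossing of the bisector) at the cost of a slightly more delicate angle computation, and both establish the same containment with the same constants.
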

\proof  We observe that the $\mathrm{CAT}(-1)$ condition implies that for each $z$ equidistant from $x_+, \hat{x}_+$ we have 
$$\angle z x_+ \hat{x}_+ \le \pi/2, \quad \angle z \hat{x}_+ {x}_+ \le \pi/2.$$
Hence,
$$
\angle x x_+ z\ge \pi/2, \quad \angle x'_+ \hat{x}_+ z\ge \pi/2.
$$
Then the lemma and  the triangle inequality implies that
$$
d(z, x)\le d(z, x'_+). 
$$
and, thus, 
$$
\Bis(x_+, \hat{x}_+)\subset H(x, x'_+).  
$$
Since every geodesic connecting  $w\in H(x_+, \hat{x}_+)$ to $x'_+$ passes through some point  $z\in \Bis(x_+, \hat{x}_+)$, it 
follows that 
\[
d(x, w)\le d(w, x'_+). \qedhere  
\]

\subsection{Quasiconvex and starlike subsets}

\begin{lemma}\label{qcstar}
Starlike subsets in a $\dl$-hyperbolic space $Y$ are $\dl$-quasiconvex.
\end{lemma}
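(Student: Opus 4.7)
The proof should just invoke the Rips $\delta$-thin triangle condition, separately in the two starlikeness cases (base point in $A$ versus base point at infinity).

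First I would dispose of the case when $A$ is starlike with respect to some $a\in A$. Given $x,y\in A$ and any geodesic segment $xy$, form the geodesic triangle $\triangle xya$ using arbitrary choices of sides $xa$ and $ya$. By starlikeness, both $xa$ and $ya$ are entirely contained in $A$. By the $\delta$-thin triangle condition for $Y$, every point $p\in xy$ lies within distance $\delta$ of $xa\cup ya\subset A$. Hence $xy\subset \bar N_\delta(A)$, as required.

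Next I would handle the case when $A$ is starlike with respect to an ideal point $\xi\in\partial Y$. The argument is the same, replacing the finite triangle by the ideal triangle with vertices $x, y, \xi$, whose two infinite sides are the geodesic rays $x\xi$ and $y\xi$ (which lie in $A$ by starlikeness) and whose third side is the chosen segment $xy$. Ideal triangles in a $\delta$-hyperbolic geodesic space are still $\delta$-thin in the sense that every point on one side is within $\delta$ of the union of the other two sides; this is the standard extension of the Rips condition to ideal triangles, obtained by exhausting $\xi$ by a sequence of interior points along the rays and passing to the limit. Applying $\delta$-thinness to the side $xy$ gives $xy\subset \bar N_\delta(x\xi\cup y\xi)\subset \bar N_\delta(A)$.

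Since the two segments $xy$ produced by different choices (in the non-uniqueness of geodesics in $\delta$-hyperbolic spaces) are within $\delta$ of each other, the above bounds give the quasiconvexity statement uniformly. The only real content is the $\delta$-thin triangle condition; the main step requiring a sentence of care is the ideal-triangle version, but it is entirely standard and can be reduced to the finite-triangle version by a limiting argument, so no calculation is needed beyond quoting it.
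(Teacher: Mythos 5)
Your proof is correct and is essentially identical to the paper's: the authors also form the triangle with vertices $z_1,z_2,a$, note that the two sides through $a$ lie in $A$ by starlikeness, and apply the $\delta$-thin triangle condition to conclude $z_1z_2\subset \bar N_\delta(A)$, leaving the ideal-vertex case to the reader as ``similar.'' Your extra sentence justifying $\delta$-thinness for ideal triangles simply fills in that omitted case; there is no substantive difference in approach.
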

\begin{proof} We prove this for subsets $A\subset Y$ starlike with respect to $a\in A$; the proof in the case of starlike subsets with respect to $\xi\in \geo Y$ is similar and is left to the reader. Take $z_1, z_2 \in A$. Then, by the $\delta$-hyperbolicity, 
\[
z_1z_2 \subset \nbhd{\dl}{az_1 \cup az_2} \subset \nbhd{\dl}{A}.\qedhere
\]
\end{proof}

Suppose now that $X$ is a Hadamard manifold of negatively pinched curvature as above. Then, according to   
 \cite[Proposition 2.5.4]{Bo2},  there exists  $\mathfrak{q}=\mathfrak{q}(\kappa,\la)$ such that 
 for every $\la$-quasiconvex subset $A\subset X$ we have 
\begin{equation}\label{eq:qcnbd}
\Hull(A)\subset \bar{N}_{\mathfrak{q}}(A).
\end{equation} 

In particular:

\begin{proposition}\label{prop:starhull}
For every  starlike subset  $A$ in a Hadamard manifold $X$ of negatively pinched curvature, the closed 
convex hull $\Hull(A)$ is contained in  the  $\mathfrak{q}=\mathfrak{q}(\kappa,\delta)$-neighborhood of $A$.
\end{proposition}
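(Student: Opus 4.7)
The plan is to read the proposition as a direct consequence of the two results immediately preceding it, with essentially no additional work required. Since $X$ is a Hadamard manifold of negatively pinched curvature with sectional curvature bounded above by $-1$, it is $\mathrm{CAT}(-1)$, hence $\delta$-hyperbolic with hyperbolicity constant $\delta = \cosh^{-1}(\sqrt{2})$ as noted at the start of the section.

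First I would apply Lemma \ref{qcstar} to the starlike subset $A \subset X$. Lemma \ref{qcstar} is stated for $\delta$-hyperbolic spaces and covers starlike subsets both with respect to an interior point and (by the remark in its proof) with respect to an ideal point $\xi \in \geo X$. The conclusion is that $A$ is $\delta$-quasiconvex, with the \emph{same} constant $\delta$ that bounds the hyperbolicity of $X$.

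Next I would feed this into the Bowditch-type estimate \eqref{eq:qcnbd}, which was cited from \cite[Proposition 2.5.4]{Bo2}: any $\lambda$-quasiconvex subset of a negatively pinched Hadamard manifold has its closed convex hull contained in a $\mathfrak{q}(\kappa,\lambda)$-neighborhood. Taking $\lambda = \delta$, we obtain
\[
\Hull(A) \subset \bar{N}_{\mathfrak{q}(\kappa,\delta)}(A),
\]
which is exactly the claim, with the advertised constant $\mathfrak{q} = \mathfrak{q}(\kappa,\delta)$.

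There is no real obstacle here: the proposition is a concatenation of Lemma \ref{qcstar} with the cited hull-neighborhood estimate, and the transition is clean because the quasiconvexity constant produced by Lemma \ref{qcstar} is the hyperbolicity constant $\delta$ of $X$, which depends only on the upper curvature bound. The only point worth flagging is ensuring that Lemma \ref{qcstar} genuinely applies in the ideal-basepoint case as well, which is why the author's aside ``the proof in the case of starlike subsets with respect to $\xi\in \geo Y$ is similar'' is essential for the proposition to hold in the generality stated.
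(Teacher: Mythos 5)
Your proof is correct and follows exactly the route the paper intends: the proposition is stated as an immediate consequence ("In particular:") of Lemma \ref{qcstar} applied with quasiconvexity constant $\lambda=\delta$, fed into the cited Bowditch estimate \eqref{eq:qcnbd}. Your remark about the ideal-basepoint case of Lemma \ref{qcstar} is a reasonable point of care but introduces nothing beyond what the paper already asserts.
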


In what follows, we will suppress the dependence of ${\mathfrak q}$ on $\kappa$ and $\delta$ since these are fixed for our space $X$.

\subsection{Classification of isometries}\label{sec:cl-isom}

Let $X$ be a negatively curved Hadamard manifold. Isometries of $X$ are classified into three types according to their translation lengths 
$\tau$, see \cite{BGS, BCG}. 

\medskip
1. An isometry $g$ of $X$ is \emph{hyperbolic} if $\tau(g)>0$.  Equivalently,  
the infimum in \eqref{eq:tr-length} is attained and is positive. In this case, the infimum 
 is attained on a $g$-invariant geodesic, called the {\em axis} of $g$, and denoted by $A_g$. 

\medskip 
2. An isometry $g$ of $X$ is {\em elliptic} of $\tau(g)=0$ and the infimum in \eqref{eq:tr-length} is attained; the set where the infimum is attained is a totally geodesic submanifold of $X$ fixed pointwise by $g$. 

\medskip
3. An isometry $g$ of $X$ is {\em parabolic} if the infimum in \eqref{eq:tr-length} is not attained.  In this case, the infimum is necessarily 
equal to zero. 

\medskip
Thus, only parabolic and elliptic isometries have zero translation lengths. For any $g\in\Isom(X)$ and $m\in \Z$ we have
\begin{equation}\label{eq:tl}
\tau(g^m)=|m| \tau(g). 
\end{equation}

\medskip
The following theorem provides an alternative characterization of types of isometries of $X$, see \cite{CDP}.

\begin{thm}\label{thm:classification-of-isometries}
Suppose that $g$ is an isometry of $X$. Then:
\begin{enumerate}
\item $g$ is hyperbolic if and only if for some (equivalently, every) $x\in X$ the orbit map $N\to g^Nx$ is a quasiisometric embedding $\Z\to X$. 

\item $g$ is elliptic if and only if  for some (equivalently, every) $x\in X$ the orbit map $N\to g^Nx, N\in \Z$ has bounded image.  

\item $g$ is parabolic if and only if for some (equivalently, every) $x\in X$ the orbit map $N\to g^Nx, N\in \Z$ is  proper and
$$
\lim_{N\to\infty} \frac{d(x, g^N(x))}{N}=0. 
$$
\end{enumerate}
\end{thm}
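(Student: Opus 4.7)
The plan is to prove the three forward implications and conclude the reverse implications from the trichotomy of isometry types (Section~\ref{sec:cl-isom}) and the pairwise incompatibility of the three orbit regimes. For (1), with $g$ hyperbolic and axis $A_g$: on $A_g$ the action of $g$ is translation by $\tau(g)$, so $d(x, g^N x) = |N|\tau(g)$ for $x \in A_g$. For general $x$, I would sandwich $d(x, g^N x)$ between $d(\pi(x), g^N \pi(x)) = |N|\tau(g)$ (using that the closest-point projection $\pi \colon X \to A_g$ is $1$-Lipschitz in the $\mathrm{CAT}(0)$ space $X$ and $g$-equivariant) and $|N|\tau(g) + 2 d(x, A_g)$ (triangle inequality via $\pi(x)$), yielding a quasi-isometric embedding $\Z \to X$. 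For (2), any fixed point $p$ of $g$ gives $d(x, g^N x) \le 2 d(x, p)$, so the orbit is bounded.

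For (3), the vanishing asymptotic rate follows from the general identity $\lim_{N\to\infty} d(x, g^N x)/N = \tau(g)$, valid for every isometry $g$ of a $\mathrm{CAT}(0)$ space. Indeed, $a_N := d(x, g^N x)$ is subadditive in $N$ by the triangle inequality and the isometry property, so by Fekete's lemma $\lim a_N/N = \inf_N a_N/N$; combining $\tau(g^N) = |N|\tau(g) \le a_N$ (from \eqref{eq:tl} and \eqref{eq:tr-length}) with the upper bound $a_N \le 2 d(x,y) + N d(y, gy)$ for $y$ almost minimizing the displacement function yields equality with $\tau(g)$. Setting $\tau(g) = 0$ handles the parabolic case.

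The main obstacle is the properness of the orbit map in the parabolic case, i.e., $a_N \to \infty$ as $|N|\to\infty$. For this I would invoke the standard structure theorem for parabolic isometries of Hadamard manifolds of pinched negative curvature: such $g$ fixes a unique point $\xi \in \geo X$, preserves every horoball based at $\xi$, and restricts to a fixed-point-free isometry of each horosphere whose orbits diverge and tend to $\xi$ in $\bar X$. Hence $\{g^N x\}_{N\in\Z}$ lies on a single horosphere and converges to $\xi$ as $|N| \to \infty$, yielding properness. Since the three types partition $\Isom(X)$ and since positive linear growth, boundedness, and proper-but-sublinear growth are pairwise incompatible, the converse implications in (1)--(3) follow at once.
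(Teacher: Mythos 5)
The paper does not prove this statement at all: it is quoted as background with a citation to \cite{CDP}, so there is no internal argument to compare against. Your proposal is a genuine proof sketch, and its overall architecture is sound: proving only the three forward implications and then deducing the converses from the trichotomy of isometry types together with the pairwise incompatibility of the three orbit behaviours (linear growth, boundedness, proper-but-sublinear growth) is exactly the right economy, and it also correctly handles the ``for some iff for every'' clause. Parts (1) and (2) are complete and correct: the sandwich $|N|\tau(g)\le d(x,g^Nx)\le |N|\tau(g)+2d(x,A_g)$ via the $1$-Lipschitz, $g$-equivariant projection to the axis, and the bound $d(x,g^Nx)\le 2d(x,p)$ from a fixed point, are the standard arguments. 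The Fekete/subadditivity computation of $\lim_N d(x,g^Nx)/N=\tau(g)$ is also correct and is arguably cleaner than what one finds in the references.

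The one soft spot is properness of the orbit in the parabolic case. Note that subadditivity alone cannot rule out an unbounded orbit with $\liminf_N d(x,g^Nx)<\infty$, and boundedness of the orbit is excluded only via the Cartan fixed-point theorem (bounded orbit in a Hadamard manifold forces a fixed point, hence ellipticity) --- a step worth making explicit. For the full properness you invoke the structure theory of parabolics (unique fixed point $\xi\in\geo X$, preservation of horoballs at $\xi$, orbits converging to $\xi$ in $\bar X$), but the phrase ``whose orbits diverge and tend to $\xi$'' is essentially a restatement of the properness you are trying to establish, so at this point your argument is a citation rather than a proof. That is acceptable --- it is no weaker than what the paper itself does, since the paper cites \cite{CDP} for the entire theorem --- but you should present it honestly as an appeal to, e.g., \cite{Bo2} or \cite{CDP}, rather than as a derivation. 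Everything else in your write-up is self-contained and correct.
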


If $f, g$ are hyperbolic isometries of $X$ generating a discrete subgroup of $\Isom(X)$, then either the ideal boundaries of the axes $A_f, A_g$ are disjoint or $A_f=A_g$, (see \cite{Bo1}, the argument for negatively curved Hadamard manifolds is similar).

\subsection{Margulis cusps and tubes}

Take $g\in \Isom(X)$.  For each $\varepsilon\ge \tau(g)$ we define the following nonempty closed convex subset of $X$: 
$$
T_{\varepsilon}(g)=\lbrace x\in X\mid d(x ,g(x))\leq \varepsilon \rbrace.
$$
Of primary importance are  subsets $T_{\varepsilon}(g)$ for $\varepsilon < \varepsilon(n, \kappa)$.  For any two isometries $g, h$ 
of $X$ we have
\begin{equation} \label{eq3.1}
T_{\varepsilon}(hgh^{-1})= h( T_{\varepsilon}(g)).
\end{equation}
In particular, if $g, h$ commute, then $h$ preserves $T_{\varepsilon}(g)$.

For parabolic isometries $g$ of $X$ define the {\em Margulis cusp}
$$
\mathcal{T}_{\varepsilon}(g):= \bigcup_{i\in {\mathbb Z} - \{0\}} T_{\varepsilon}(g^i). 
$$
(The same definition works for elliptic isometries of $X$, except the above region is not called a {\em cusp}.) This subset is $\langle g\rangle$-invariant.

Suppose that $g$ is a hyperbolic isometry of $X$. Define  $m_{g}$ to be the (unique) 
positive integer such that 
\begin{equation}\label{eq:mg}
\tau(g^{m_{g}})\leq \varepsilon/10, \quad
\tau(g^{m_{g}+1})> \varepsilon/10,
\end{equation}   
and set 
$$
\mathcal{T}_{\varepsilon}(g): = \bigcup_{1\le i \leq m_g} T_{\varepsilon}(g^{i})\subset X.
$$
If $\tau(g)>\varepsilon/10$, then $\mathcal{T}_{\varepsilon}(g)=\emptyset$. 

Since the subgroup $\langle g\rangle$  is abelian, in view of \eqref{eq3.1}, we obtain: 

\begin{lemma}\label{lem3.1}
The subgroup $\langle g\rangle$ preserves $\mathcal{T}_{\varepsilon}(g)$ and, hence,  also 
 preserves $\Hull(\mathcal{T}_{\varepsilon}(g))$. 
\end{lemma}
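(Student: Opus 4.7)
The plan is to derive both assertions directly from the relation \eqref{eq3.1} together with the commutativity of $\langle g\rangle$, and then use the fact that any isometry preserves the operation of taking convex hulls.

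First I would verify that $g$ sends each constituent set $T_{\varepsilon}(g^i)$ of the union defining $\mathcal{T}_\varepsilon(g)$ to itself. Applying \eqref{eq3.1} with $h=g$ gives
\[
g\bigl(T_{\varepsilon}(g^i)\bigr) = T_{\varepsilon}(g\, g^i\, g^{-1}) = T_{\varepsilon}(g^i),
\]
since $g$ commutes with every power of itself. Because this holds in both cases of the definition (the index set $\mathbb{Z}\setminus\{0\}$ for parabolic/elliptic $g$, and $\{1,\ldots,m_g\}$ for hyperbolic $g$), it follows that $g$ preserves the union $\mathcal{T}_\varepsilon(g)$. The same applies to $g^{-1}$ (which has the same index set by symmetry, or by the observation that $g$ preserving a set implies $g^{-1}$ preserves it bijectively, since $\mathcal{T}_\varepsilon(g)$ is a union of closed sets, each permuted by $\langle g\rangle$). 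Hence all of $\langle g\rangle$ preserves $\mathcal{T}_\varepsilon(g)$.

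For the convex hull, I would argue that any isometry $h$ of $X$ satisfies $h(\Hull(A)) = \Hull(h(A))$ for every subset $A\subset X$. Indeed, $h$ maps closed convex sets to closed convex sets bijectively, so $h(\Hull(A))$ is a closed convex set containing $h(A)$, giving $\Hull(h(A))\subset h(\Hull(A))$; applying the same reasoning to $h^{-1}$ yields the reverse inclusion. Applying this with $h=g$ and $A=\mathcal{T}_\varepsilon(g)$, and using the first part, we conclude
\[
g\bigl(\Hull(\mathcal{T}_\varepsilon(g))\bigr)=\Hull\bigl(g(\mathcal{T}_\varepsilon(g))\bigr)=\Hull(\mathcal{T}_\varepsilon(g)).
\]

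Neither step poses a real obstacle; the only point that requires a little care is making sure the definition of $\mathcal{T}_\varepsilon(g)$ is genuinely $\langle g\rangle$-symmetric (i.e.\ that the index set of the union is preserved by $i\mapsto i$ under conjugation), which is immediate from commutativity of $\langle g\rangle$.
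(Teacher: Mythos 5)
Your proposal is correct and follows exactly the route the paper intends: the paper derives the lemma immediately from the commutativity of $\langle g\rangle$ together with \eqref{eq3.1}, which is precisely your computation $g(T_{\varepsilon}(g^i)) = T_{\varepsilon}(g g^i g^{-1}) = T_{\varepsilon}(g^i)$, followed by the standard fact that isometries commute with taking convex hulls. Nothing is missing.
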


By convexity of the distance function, for any isometry $g\in\Isom(X)$,  $T_{\varepsilon}(g)$ is convex. In particular, $\TT(g)$ is a starlike region with respect to any fixed point $p\in \bar{X}$ of $g$ for general $g$, and with respect to 
 and any point on the axis of $g$ if $g$ is hyperbolic.
As a corollary to Lemma \ref{qcstar}, one obtains,

\begin{corollary}\label{qctube}
For every isometry $g\in\Isom(X)$, the set  $\TT(g)$ is $\dl$-quasiconvex.
\end{corollary}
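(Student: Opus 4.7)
The plan is to obtain $\delta$-quasiconvexity as an immediate corollary of Lemma \ref{qcstar}, so the entire content of the proof is verifying that $\TT(g)$ is starlike in the sense defined in Section 2. This is essentially the assertion in the paragraph preceding the corollary, but since starlikeness is stated there without argument, I would provide a short case analysis according to the type of $g$. In all three cases the key fact is that every individual $T_\varepsilon(g^i)$ is closed and convex, being a sublevel set of the convex function $x \mapsto d(x, g^i(x))$ on the Hadamard manifold $X$.

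First I would dispose of the two easy cases. If $g$ is elliptic, pick any fixed point $p\in X$ of $g$; then $g^i(p)=p$, so $p\in T_\varepsilon(g^i)$ for every $i\neq 0$, and convexity of $T_\varepsilon(g^i)$ makes each summand of the union starlike with respect to $p$, hence so is $\TT(g)$. If $g$ is hyperbolic, then for every $1\le i\le m_g$ we have $\tau(g^i)=i\tau(g)\le \tau(g^{m_g})\le \varepsilon/10\le\varepsilon$ by \eqref{eq:mg} and \eqref{eq:tl}, so the axis $A_g$ lies inside $T_\varepsilon(g^i)$. Fixing any $p\in A_g$, convexity of each $T_\varepsilon(g^i)$ yields starlikeness of $\TT(g)$ with respect to $p$.

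The main obstacle, and the case that actually needs an argument, is the parabolic case, since here the natural center is the unique fixed point $\xi\in\geo X$ of $g$ and it lies at infinity. I would argue as follows. For $i\ne 0$, the set $T_\varepsilon(g^i)$ is $g$-invariant (since $g$ commutes with $g^i$, using \eqref{eq3.1}). Given any $y\in T_\varepsilon(g^i)$, the orbit $\{g^n(y)\}_{n\in\Z}$ lies in $T_\varepsilon(g^i)$; by Theorem \ref{thm:classification-of-isometries}(3) this orbit is proper, and since $g$ is parabolic with fixed point $\xi$, the sequences $g^n(y)$ accumulate only at $\xi$, i.e.\ $g^n(y)\to\xi$ as $n\to+\infty$. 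Convexity of $T_\varepsilon(g^i)$ then shows that each segment $y\,g^n(y)$ is contained in $T_\varepsilon(g^i)$, and these segments converge to the geodesic ray $y\xi$ on compacts. Since $T_\varepsilon(g^i)$ is closed, the whole ray lies in $T_\varepsilon(g^i)$. Taking the union over $i\ne 0$ shows that $\TT(g)$ is starlike with respect to $\xi$.

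Once starlikeness is established in all three cases, Lemma \ref{qcstar} applies directly: $X$ is $\mathrm{CAT}(-1)$ and hence $\delta$-hyperbolic with $\delta=\cosh^{-1}(\sqrt{2})$, so starlike subsets are $\delta$-quasiconvex. This yields the claim. The only genuinely delicate point, as explained, is the closure step in the parabolic case, which is why I would single it out as the main issue — the elliptic and hyperbolic cases are immediate from convexity alone.
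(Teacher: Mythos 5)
Your proof is correct and follows essentially the same route as the paper: the paper simply asserts (in the paragraph preceding the corollary) that convexity of each $T_\varepsilon(g^i)$ makes $\TT(g)$ starlike with respect to a fixed point of $g$ in $\bar{X}$ and then invokes Lemma \ref{qcstar}. Your case analysis, including the limiting argument for the parabolic case via closedness and convexity of $T_\varepsilon(g^i)$, is just a careful filling-in of the details the paper leaves implicit.
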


 Proposition \ref{prop:starhull} then implies 

\begin{corollary}\label{cor:hull}
For every isometry $g\in \Isom(X)$,
\[
\Hull(\TT(g)) \subset \nbhd{\mathfrak{q}}{\TT(g)},
\]
where $\mathfrak{q}$ is as  in Proposition \ref{prop:starhull}.
\end{corollary}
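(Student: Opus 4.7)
The plan is to combine Proposition \ref{prop:starhull} with the starlikeness assertion made in the paragraph immediately preceding Corollary \ref{qctube}. The only real work is to justify that assertion for each type of isometry $g$ and then invoke the proposition.

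First, I would pick a common ``star center'' $p \in \bar{X}$ that belongs to every convex piece $T_{\varepsilon}(g^i)$ appearing in the union defining $\TT(g)$. If $g$ is elliptic, take $p$ to be a fixed point of $g$ in $X$; it is fixed by every power of $g$, so $p \in T_{\varepsilon}(g^i)$ for all $i$. If $g$ is hyperbolic, take $p \in A_g$; by \eqref{eq:mg}, $d(p, g^i p) = i\tau(g) \le m_g \tau(g) \le \varepsilon/10 < \varepsilon$ for $1 \le i \le m_g$, so $p$ lies in every relevant $T_{\varepsilon}(g^i)$. If $g$ is parabolic, take $p$ to be the unique fixed point $\xi \in \geo X$ of $g$; every nontrivial power of $g$ fixes $\xi$, and each convex set $T_{\varepsilon}(g^i)$ is unbounded toward $\xi$, i.e.\ has $\xi$ in its ideal boundary.

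Next, I would deduce starlikeness of $\TT(g)$ with respect to $p$. Convexity of the displacement function shows that each $T_{\varepsilon}(g^i)$ is convex. In the elliptic and hyperbolic cases, convexity together with $p \in T_{\varepsilon}(g^i)$ immediately gives starlikeness in the sense of Section 2. The parabolic case is the step I would flag as the main (and only) obstacle: one must use the standard $\mathrm{CAT}(0)$ fact that a closed convex subset having $\xi \in \geo X$ in its ideal boundary contains the entire ray $y\xi$ from each of its points $y$, which is precisely starlikeness with respect to $\xi$ in the boundary sense defined in Section 2. In all three cases, since every $T_{\varepsilon}(g^i)$ is starlike with respect to the \emph{same} point $p$, the union $\TT(g)$ is starlike with respect to $p$ as well.

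Finally, an application of Proposition \ref{prop:starhull} yields
\[
\Hull(\TT(g)) \subset \nbhd{\mathfrak{q}}{\TT(g)},
\]
which is the claimed inclusion. The edge case $\TT(g) = \emptyset$, which happens precisely when $g$ is hyperbolic with $\tau(g) > \varepsilon/10$, makes the statement trivially true.
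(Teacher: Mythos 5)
Your proposal is correct and follows essentially the same route as the paper, which simply asserts the starlikeness of $\TT(g)$ (with respect to a fixed point of $g$ in $\bar{X}$, or a point on the axis in the hyperbolic case) and then invokes Proposition \ref{prop:starhull}. The only difference is that you spell out the verification of starlikeness for each isometry type, which the paper leaves implicit; your details are accurate.
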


For a more detailed discussion of the regions $\TT(g)$, see \cite{ Bo2, KL1}.

\subsection{Displacement estimates}
In this subsection, we let $X$ be a $\mathrm{CAT}(-1)$ geodesic metric space.
For each pair of points $A, B\in \H^2$ and each circle $S\subset \H^2$ passing through these points, we let $\arc{AB}^S$ denote 
the (hyperbolic) length of the shorter arc into which $A, B$ divide the circle $S$.

\begin{lemma}\label{lem:L1}
If $d(A,B)\le D$ then for every circle $S$ as above, then the length $\ell$ of $\arc{AB}^{S}$ satisfies the inequality: 
$$
d(A,B) \le \ell\le \frac{2\pi \tanh(D/4)}{1- \tanh^2(D/4)}.  
$$
\end{lemma}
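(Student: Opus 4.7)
The plan is to prove the two bounds separately. The lower bound $d(A,B) \le \ell$ is immediate: $\arc{AB}^{S}$ is a continuous path in $\H^2$ from $A$ to $B$, so its length is at least the hyperbolic distance $d(A,B)$.

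For the upper bound, I will parameterize $S$ by its center $O$ and radius $r$. Setting $d:=d(A,B)$, letting $M$ be the midpoint of the geodesic segment $AB$, and denoting by $2\theta\in(0,\pi]$ the central angle subtended by the shorter arc, I observe that $OM\perp AB$ by symmetry. Applying hyperbolic trigonometry to the right triangle $\triangle OMA$ (right-angled at $M$, hypotenuse $|OA|=r$, angle $\theta$ at $O$, opposite leg of length $d/2$) gives
\[
\sin\theta \,=\, \frac{\sinh(d/2)}{\sinh r}.
\]
Combining this with $\ell = 2\theta\sinh r$ and eliminating $\sinh r$ yields
\[
\ell \,=\, \frac{2\theta}{\sin\theta}\,\sinh(d/2).
\]

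The key monotonicity step is to show that $\theta\mapsto 2\theta/\sin\theta$ is increasing on $(0,\pi/2]$. Differentiation reduces the positivity of the derivative to $\sin\theta-\theta\cos\theta>0$, i.e.\ to the elementary inequality $\tan\theta>\theta$ on $(0,\pi/2)$. Consequently $2\theta/\sin\theta \le \pi$, so
\[
\ell \,\le\, \pi\sinh(d/2) \,\le\, \pi\sinh(D/2),
\]
using $d\le D$ and the monotonicity of $\sinh$. To match the form stated in the lemma, I will apply the double-angle identity $\sinh(D/2)=2\sinh(D/4)\cosh(D/4)$ together with $1-\tanh^2(D/4)=1/\cosh^2(D/4)$ to rewrite
\[
\pi\sinh(D/2) \,=\, \frac{2\pi\tanh(D/4)}{1-\tanh^2(D/4)}.
\]
The only genuinely geometric content, hidden behind an otherwise routine computation, is the observation that the worst case is $2\theta=\pi$, that is, when $AB$ is a diameter of $S$ (so $r=d/2$); circles of larger radius are "flatter", and their shorter arcs have length closer to the geodesic distance $d$. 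Thus the extremal bound arises from the smallest admissible circle through $A$ and $B$.
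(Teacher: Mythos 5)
Your argument is correct and lands on exactly the paper's bound (your $\pi\sinh(D/2)$ equals $2\pi\tanh(D/4)/(1-\tanh^2(D/4))$ by the half-angle identities you quote), and it shares the paper's overall structure: the lower bound is immediate, and the upper bound reduces to the extremal case in which $AB$ is a diameter of $S$, the answer being the length of a hyperbolic semicircle of diameter $D$. Where you genuinely diverge is in how extremality is established. The paper argues softly and coordinate-free: for any circle $S$ through $A,B$, the radial projection centered at the midpoint $o$ of $AB$ maps the semicircle $\arc{AB}^{S_o}$ onto $\arc{AB}^{S}$ and is distance-decreasing by convexity, so $S_o$ maximizes the arc length; the final formula is then read off from the circumference computation in the disk model (whence the $\tanh(D/4)$). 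You instead derive the closed form $\ell = 2\theta\sinh(d/2)/\sin\theta$ from the hyperbolic right-triangle relation $\sin\theta=\sinh(d/2)/\sinh r$ and the arc-length formula $\ell=2\theta\sinh r$, and then optimize over $\theta\in(0,\pi/2]$ by the elementary inequality $\tan\theta>\theta$. Your route is more computational but also more self-contained --- it does not appeal to the Lipschitz property of radial projections --- and it gives the sharper intermediate fact that $\ell$ is an explicit increasing function of the half-angle $\theta$, identifying the extremal circle analytically rather than by comparison. The only cosmetic point is that at $\theta=\pi/2$ the triangle $\triangle OMA$ degenerates ($O=M$), so that boundary case should be read as a limit; this does not affect the supremum bound.
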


\begin{proof}
 The first inequality is clear, so we verify the second. We want to maximize the length of $\arc{AB}^S$ among all circles $S$ passing through $A, B$. We claim that the maximum is achieved on the circle $S_o$ whose center $o$ is the midpoint of $AB$.  This follows from the fact that given any other circle $S$, we have the radial projection from $\arc{AB}^{S_o}$ to $\arc{AB}^S$ (with the center of the projection 
at $o$). Since this radial projection is distance-decreasing (by convexity), the claim follows.  The rest of the proof  amounts to 
a computation of the length of the hyperbolic half-circle with the given diameter. 
\end{proof}

\begin{lemma}
\label{decrease speed}
There exists a function $c(D)$ so that the following holds.  Consider
an isosceles triangle $ABC$ in $X$ with $d(A,C)=d(B,C)$, $d(A,B)\le D$, and an isosceles subtriangle $A'B'C$ with $A'\in AC, B'\in BC$, $d(A,A')=d(B,B')=\tau$.  Then 
$$
d(A', B')\leq c (D) e^{-\tau}. 
$$
\end{lemma}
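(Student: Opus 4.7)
The plan is to reduce to an explicit computation in the hyperbolic plane $\H^2$ via $\mathrm{CAT}(-1)$ comparison, and then invoke the hyperbolic law of cosines.

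First, I would exploit the $\mathrm{CAT}(-1)$ condition. Build a comparison triangle $\overline{\triangle}\bar{A}\bar{B}\bar{C}$ in $\H^2$ with the same side lengths as $\triangle ABC$; it is automatically isosceles with $d(\bar A,\bar C)=d(\bar B,\bar C)=:L$. Let $\bar{A}'\in \bar{A}\bar{C}$ and $\bar{B}'\in \bar{B}\bar{C}$ be the comparison points at distance $\tau$ from $\bar{A}$ and $\bar{B}$ respectively (this makes sense since $\tau\le L$ by the hypothesis $A'\in AC$). The $\mathrm{CAT}(-1)$ condition gives
\[
d(A',B')\le d_{\H^2}(\bar{A}',\bar{B}'),
\]
so it suffices to bound the right-hand side.

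Second, I would compute $d_{\H^2}(\bar{A}',\bar{B}')$ explicitly. Let $2\alpha$ denote the apex angle at $\bar{C}$. The hyperbolic law of cosines applied to $\overline{\triangle}\bar{A}\bar{B}\bar{C}$, together with the identity $1-\cos(2\alpha)=2\sin^2\alpha$, gives
\[
\sinh(D/2)\ge \sinh(d(A,B)/2)=\sinh(L)\sin\alpha.
\]
The same identity applied to the sub-triangle $\overline{\triangle}\bar{A}'\bar{B}'\bar{C}$ (apex angle still $2\alpha$, equal sides of length $L-\tau$) yields
\[
\sinh\!\bigl(d_{\H^2}(\bar{A}',\bar{B}')/2\bigr)=\sinh(L-\tau)\sin\alpha=\sinh(d(A,B)/2)\cdot\frac{\sinh(L-\tau)}{\sinh(L)}.
\]

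Third, I would finish with two elementary inequalities: $\sinh(L-\tau)/\sinh(L)\le e^{-\tau}$ for $0\le\tau\le L$ (a direct manipulation of exponentials), and $x\le\sinh(x)$ for $x\ge0$. Combining these with $d(A,B)\le D$,
\[
d(A',B')\le 2\sinh\!\bigl(d_{\H^2}(\bar{A}',\bar{B}')/2\bigr)\le 2\sinh(D/2)\,e^{-\tau},
\]
so one may take $c(D):=2\sinh(D/2)$.

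There is no real obstacle: the argument is a standard $\mathrm{CAT}(-1)$ reduction followed by a textbook hyperbolic calculation. The only mild point of care is tracking that the comparison map preserves the isoceles structure and the positions of $A',B'$ on the sides, both of which are immediate from the definition of a comparison triangle.
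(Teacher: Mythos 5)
Your proof is correct, but it takes a genuinely different route from the paper's. The paper also reduces to $\H^2$ via the $\mathrm{CAT}(-1)$ comparison, but then works in the unit disk model with $C$ at the origin and bounds $d(A',B')$ by the length of the circular arc of angular measure $\angle ACB$ through $A',B'$; the exponential decay comes from an explicit estimate of the ratio of arc lengths $l_t/l_T$ in terms of Euclidean radii $R_t=\tanh(t/2)$, and the dependence on $D$ enters through a separate lemma (Lemma \ref{lem:L1}) bounding the length of a hyperbolic circular arc through two points at distance $\le D$. Your argument replaces all of this with the exact isosceles identity $\sinh(d(A,B)/2)=\sinh(L)\sin\alpha$ from the hyperbolic law of cosines, applied to the comparison triangle and its subtriangle, plus the elementary inequalities $\sinh(L-\tau)/\sinh(L)\le e^{-\tau}$ and $x\le\sinh x$. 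This is cleaner: it bypasses Lemma \ref{lem:L1} entirely and yields the sharper constant $c(D)=2\sinh(D/2)$, versus the paper's $c(D)=\frac{8\pi\tanh(D/4)}{1-\tanh^2(D/4)}=4\pi\sinh(D/2)$; both give the same decay rate $e^{-\tau}$, which is all that is used downstream. The one point worth stating explicitly (which you do correctly) is that the $\mathrm{CAT}(-1)$ comparison inequality applies to the pair $A'\in AC$, $B'\in BC$ with their comparison points taken at the same arc-length positions, so the reduction to $\H^2$ preserves the isosceles configuration.
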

\begin{proof} In view of the $\mathrm{CAT}(-1)$  assumption, it suffices to consider the case when $X=\H^2$. We will work with the unit disk model of 
the hyperbolic plane where $C$ is the center of the disk as in Figure \ref{disk triangle}.
\begin{figure}[h]
\centering
\includegraphics[width=2.0in]{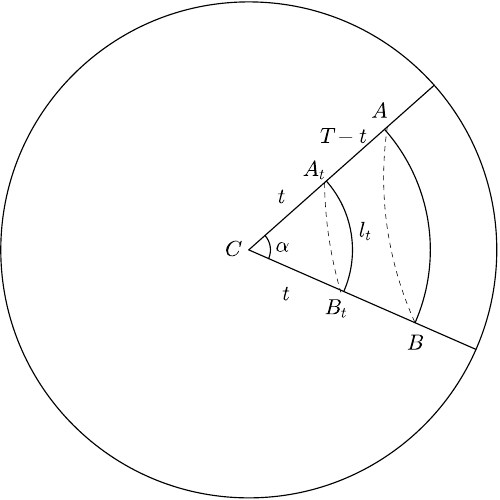}
\caption{ \label{disk triangle}}
\end{figure}
  Let $\al$ denote the angle $\angle(ACB)$.  Set $T:=d(C,A)=d(C,B)$. 
For points $A_t\in CA, B_t\in CB$ such that $d(C,A_t)=d(C,B_t)=t$ we let $l_t$ denote the hyperbolic length of the (shorter) 
circular arc  $\arc{A_{t}B_{t}}=\arc{A_{t}B_{t}}^{S_t}$ of the angular measure $\al$, centered at $C$ and connecting $A_t$ to $B_t$.  
(Here $S_t$ is the  circle centered at $C$ and of the hyperbolic radius $t$.) 
Let $R_t$ denote the Euclidean distance between $C$ and $A_t$ (same for $B_t$).  
Then 
$$
l_{t}=\dfrac{2\alpha R_{t}}{1-R_{t}^{2}},
$$
$$
R_{t}=\tanh(t/2). 
$$
Thus,  for $\tau=T-t$, 
\begin{multline*}
\dfrac{l_{t}}{l_{T}}=\dfrac{R_{t}}{R_{T}}\dfrac{1-R_{T}^{2}}{1-R_{t}^{2}}\leq 
\dfrac{1-R_{T}^{2}}{1-R_{t}^{2}}\leq 
2\dfrac{1-R_{T}}{1-R_{t}}\\
=2\dfrac{1-\tanh(T/2)}{1-\tanh(t/2)}=2\dfrac{e^{t}+1}{e^{T}+1}
=2\dfrac{e^{-T}+e^{-\tau}}{e^{-T}+1}\leq 4e^{-\tau}.
\end{multline*}
 In other words, 
$$
d(A_t,B_t)\le l_t \leq 4e^{-\tau}l_{T}. 
$$
Combining this inequality with Lemma \ref{lem:L1}, we obtain 
$$
l_t\le 4e^{-\tau} \frac{2\pi \tanh(d(A,B)/4)}{1- \tanh^2(d(A,B)/4)}\le 4e^{-\tau} \frac{2\pi \tanh(D/4)}{1- \tanh^2(D/4)}. 
$$
Lastly, setting $A'=A_t, B'=B_t, A=A_T, B=B_T$, we get: 
\[
d(A',B')\le  4 \frac{2\pi \tanh(D/4)}{1- \tanh^2(D/4)} e^{-\tau}= c(D) e^{-\tau}. \qedhere
\]
\end{proof}

\begin{corollary}
\label{margulis distance}
There exists a function $\mathfrak{r}(\varepsilon)$ such that for any hyperbolic isometry $h\in \Isom(X)$ with translation length 
$$
\tau(h)=l \leq  \varepsilon/10,$$ 
if $A\in X$ satisfies $d(A, h(A))=\varepsilon$, then there exists $B\in X$ such that $d(B, h(B))=\varepsilon/3$, $d(A, B)\leq \mathfrak{r}=\mathfrak{r}(\varepsilon)$ and $B$ lies 
of the shortest geodesic segment connecting  $A$ to the axis $A_{h}$ of $h$.  
\end{corollary}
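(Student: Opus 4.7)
The plan is to use the intermediate value theorem along the perpendicular from $A$ to the axis $A_h$ to produce the point $B$, and then to bound $d(A,B)$ by combining Lemma~\ref{decrease speed} with a CAT$(-1)$ angle-comparison estimate. Let $P$ be the foot of perpendicular from $A$ to $A_h$, set $T := d(A, P)$, and for $t \in [0, T]$ let $A_t$ denote the point on $AP$ at distance $t$ from $A$. The displacement function $\phi(x) := d(x, h(x))$ is convex on $X$ and attains its infimum $l = \tau(h)$ on $A_h$, so $t \mapsto \phi(A_t)$ is convex and monotonically decreasing from $\phi(A_0) = \varepsilon$ to $\phi(A_T) = l \leq \varepsilon/10$. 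The intermediate value theorem yields $t^* \in (0, T)$ with $\phi(A_{t^*}) = \varepsilon/3$; set $B := A_{t^*}$. All the work is in bounding $t^*$ by a function of $\varepsilon$.

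To apply Lemma~\ref{decrease speed}, I need an apex $C \in A_h$ with $d(A, C) = d(h(A), C)$. Parameterizing $A_h$ as $\{P_s : s \in \R\}$ with $P_0 = P$ and $h(P_s) = P_{s+l}$, the function $f(s) := d(A, P_s) - d(h(A), P_s)$ satisfies $f(0) < 0$ (since $P$ is not the perpendicular foot from $h(A)$) and $f(l) > 0$, so IVT gives $C := P_{s^*}$ with $s^* \in [0, l]$, $d(A, C) = d(h(A), C) =: D$, $d(P, C) \le l$, and $D \in [T, T+l]$. Lemma~\ref{decrease speed} applied to the isosceles triangle $\triangle A h(A) C$ (base of length $\varepsilon$) then produces, for each $\tau \in [0, D]$, points $A'_\tau \in AC$ and $B'_\tau \in h(A)C$ at distance $\tau$ from $A$ and $h(A)$ with $d(A'_\tau, B'_\tau) \le c(\varepsilon)e^{-\tau}$.

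The crux is transferring this estimate to $d(A_\tau, h(A_\tau))$. For $\tau \le T$, I estimate $d(A_\tau, A'_\tau)$ via the CAT$(-1)$ comparison on the thin triangle $\triangle APC$ (sides $T$, $D$, $d(P,C) \le l$): an $\H^2$-cosine-law computation on the comparison triangle bounds the angle $\bar\alpha$ at $\bar A$ by $\sin(\bar\alpha/2) \le \sinh(l/2)/\sinh T$, and the $\H^2$ chord formula $\sinh(d/2) = \sinh(\tau)\sin(\bar\alpha/2)$ for the isosceles sub-triangle then gives $d(A_\tau, A'_\tau) \le l$ whenever $\tau \le T$. The symmetric argument applied to $\triangle h(A)h(P)C$ yields $d(h(A_\tau), B'_\tau) \le l$, so by the triangle inequality $\phi(A_\tau) \le 2l + c(\varepsilon)e^{-\tau}$. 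Since $2l \le \varepsilon/5$, setting $\tau^* := \log(15 c(\varepsilon)/(2\varepsilon))$ forces $\phi(A_{\tau^*}) \le \varepsilon/3$ whenever $\tau^* \le T$; monotonicity of $\phi|_{AP}$ then gives $t^* \le \tau^*$, while if $\tau^* > T$ the bound $t^* \le T < \tau^*$ is automatic. So $\mathfrak{r}(\varepsilon) := \tau^*$ works.

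The main obstacle I expect is the angle-comparison step: the triangle $\triangle APC$ can be long and very thin when $l \ll T$, and I need the resulting bound on $d(A_\tau, A'_\tau)$ to be genuinely independent of $T$. The argument succeeds because the $\sinh T$ in the denominator of the angle estimate cancels exactly against the $\sinh \tau$ appearing in the chord formula, precisely under the constraint $\tau \le T$ that I need in any case.
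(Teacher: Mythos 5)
Your proposal is correct, and it follows the same overall strategy as the paper (locate $B$ on the perpendicular $AP$ by convexity of the displacement function, then use Lemma \ref{decrease speed} to force exponential decay of the displacement along $AP$), but the key comparison step is implemented differently. The paper does not choose an equidistant apex: it works with the non-isosceles triangle $\triangle A\, h(A)\, C$ where $C=P$ is the foot of the perpendicular, and repairs the lack of symmetry by introducing auxiliary points $A', A'', D, E$ on the segment $h(A)C$ so as to apply Lemma \ref{decrease speed} twice --- once to an isosceles triangle with apex $C$ and base $\le 11\varepsilon/10$, once to one with apex $h(A)$ and base $\le 2l$ --- and then chains $d(B,h(B))\le d(B,D)+d(D,E)+d(E,h(B))$, using convexity to get $T-t>t$ so that both exponential terms are $\le e^{-t}$. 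You instead place the apex $C$ on the axis equidistant from $A$ and $h(A)$ (which exists by your IVT argument, with $d(P,C)\le l$), apply Lemma \ref{decrease speed} once to the genuinely isosceles triangle with base exactly $\varepsilon$, and transfer the estimate from $AC$ to $AP$ by an explicit $\mathrm{CAT}(-1)$ angle--chord computation on the thin triangle $\triangle APC$; your observation that the $\sinh T$ in the angle bound cancels against the $\sinh\tau$ in the chord formula precisely when $\tau\le T$ is the right one, and the symmetric estimate on $\triangle h(A)\,h(P)\,C$ indeed gives $d(h(A_\tau),B'_\tau)\le l$ because $h(P)=P_l$ and $C=P_{s^*}$ with $s^*\in[0,l]$ both lie on the axis, so $d(h(P),C)\le l$. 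Your route trades the paper's elementary but somewhat fiddly bookkeeping with four auxiliary points for two short hyperbolic trigonometry estimates; both yield an $\mathfrak{r}(\varepsilon)$ of the same logarithmic form, with slightly different constants ($c(\varepsilon)$ for you versus $c(11\varepsilon/10)+c(\varepsilon/5)$ in the paper).
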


\begin{proof}
Let $C\in A_{h}$ be the closest point to $A$ in $A_{h}$. By the convexity of the distance function, there exists a point $B\in AC$ such that $d(B, h(B))=\varepsilon/3$. Suppose that $d(A, B)=d(h(A), h(B))=t$ and $d(A, C)=d(h(A), h(C)))=T$ as shown in Figure \ref{quidrilateral split}. Then $d(C, h(A))\leq T+l \leq T+\varepsilon/10$. There exist points $D, E$ in the segment $h(A) C$ such that $d(C, D)=d(C, B)=T-t$,  
$d(h(A), E)=t$ and $d(A', C)=d(A, C)=T$.

Then $d(A, A')\leq \varepsilon+l \leq 11\varepsilon/10$. 
By Lemma \ref{decrease speed},   $c(11 \varepsilon/10)$ (defined in that lemma) satisfies 
$$
d(B, D)\leq  c(d(A, A')) e^{-t}\le c(11\varepsilon/10) e^{-t}. 
$$ 
Similarly, by taking the point  $A''\in h(A)C$ satisfying $d(A'', h(A))=T$,  $d(h(C), A'')\le 2l$, 
considering the isosceles triangle $\triangle h(C)A'' h(A)$ and its subtriangle $\triangle h(B) E h(A)$,  we obtain:
$$
d(h(B), E)\leq  c(2l) e^{t-T}. 
$$

Since $l\leq \varepsilon/10$ and $d(B, h(B))=\varepsilon/3$,  convexity  
of the distance function implies that  $T-t> t$. Thus, 
\begin{multline*}
\varepsilon/3 =d(B, h(B))\leq d(B, D)+ d(D, E)+ d(E, h(B)) \\
\leq c(11\varepsilon/10) e^{-t} + l +  c(2l) e^{t-T}\le
 c(11\varepsilon/10) e^{-t} + \frac{\varepsilon}{10} + 
 c({\varepsilon/5}) e^{-t},
 \end{multline*}
 which simplifies to
 $$
\frac{7}{30}\varepsilon \le  \left( c(11\varepsilon/10)  + c(\varepsilon/5) \right) e^{-t}, 
$$
and consequently
\[
d(A,B)= t\le \mathfrak{r}(\varepsilon):=  \log \left (  [c(11\varepsilon/10)  + c(\varepsilon/5)]  \frac{30}{7}\varepsilon^{-1}  \right).  \qedhere
\]
\end{proof}

\begin{figure}[H]
\centering
\includegraphics[width=2.5in]{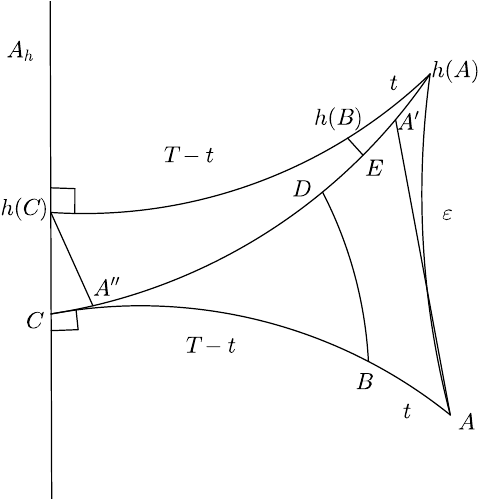}
\caption{ \label{quidrilateral split}}
\end{figure}

\subsection{Local-to-global principle for quasigeodesics in $X$}

For a piecewise-geodesic path consisting of alternating `long'  arcs and `short' segments  such that adjacent geodesic segments meet at the angles $\geq \pi/2$, we construct a quasigeodesic in $X$ by making the long segments sufficiently long, given   a lower bound on the lengths of the short arcs. More precisely, according to \cite[Proposition 7.3]{KL1}:

\begin{proposition}
\label{qua}
There are  functions   $\lambda=\la(\varepsilon)\ge 1, \alpha=\al(\varepsilon)\ge 0$ and  $L=L(\varepsilon)>\varepsilon>0$ 
such that the following  holds. Suppose that $\gamma=\cdots \gamma_{-1}\ast \gamma_{0}\ast \gamma_{1}\ast \cdots \ast \gamma_{n}... \subseteq X$ is a piecewise geodesic path  such that:
\begin{enumerate}
\item Each geodesic arc $\gamma_{j}$ has length either at least $\varepsilon$ or at least $L$. 

\item 
If $\gamma_{j}$ has length $<L$, then the adjacent geodesic arcs $\gamma_{j-1}$ and $\gamma_{j+1}$ have lengths at least $L$. 

\item All adjacent geodesic segments  meet at the angles $\ge \pi/2$. 
\end{enumerate}
Then $\gamma$ is a $(\la,\alpha)$-quasigeodesic in $X$.  
\end{proposition}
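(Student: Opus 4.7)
Since $X$ has sectional curvature $\leq -1$, it is a $\mathrm{CAT}(-1)$ geodesic space, hence Gromov hyperbolic with hyperbolicity constant $\delta = \cosh^{-1}(\sqrt{2})$, and Lemma~\ref{lem:RA-triangle} applies at every right-angled turn of $\gamma$. The plan is to combine a local quasigeodesic estimate, derived by iterating Lemma~\ref{lem:RA-triangle}, with the classical local-to-global theorem for quasigeodesics in a $\delta$-hyperbolic space (see, for instance, \cite{CDP}).

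First, I would establish the local estimate for a finite subpath of $\gamma$ whose number of arcs is bounded by a constant depending only on $\varepsilon$, by induction on this number. At each interior vertex $p_i$, the right-angled turn condition together with Lemma~\ref{lem:RA-triangle} gives $d(p_{i-1},p_{i+1}) \geq l_i + l_{i+1} - 2\delta$, where $l_j = |\gamma_j|$. The inductive step will absorb a pair $\gamma_i \ast \gamma_{i+1}$ into the single segment $[p_{i-1},p_{i+1}]$ and continue. The difficulty is that the new angle at $p_{i+1}$ between $[p_{i-1},p_{i+1}]$ and $\gamma_{i+2}$ differs from the original $\angle p_i p_{i+1} p_{i+2} \geq \pi/2$ by at most $\angle p_i p_{i+1} p_{i-1}$, which by the $\mathrm{CAT}(-1)$ comparison is bounded above by the corresponding comparison-triangle angle $\arctan\bigl(\tanh(l_i)/\sinh(l_{i+1})\bigr)$ in $\H^2$.

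I would then choose $L = L(\varepsilon)$ sufficiently large that the accumulated angle deviations are summable. In the long-long case the deviation is at most $\arctan(1/\sinh(L))$, which tends to $0$ as $L \to \infty$. In the long-short case, condition~(2) forces the arc on the other side to be long, and straightening toward the long-arc side keeps the relevant side of the comparison triangle long, and thus the deviation small. A generalized version of Lemma~\ref{lem:RA-triangle} for triangles with angle $\geq \pi/2 - \eta$ (which in $\mathrm{CAT}(-1)$ still yields an inequality of the form $|A_1A_2| \geq |A_1C|+|A_2C|-2\delta-C(\eta)$ with an explicit $C(\eta)\to 0$ as $\eta \to 0$) then allows the induction to close with uniform local constants $(\lambda_0,\alpha_0)$ depending only on $\varepsilon$.

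Finally, the local-to-global theorem for quasigeodesics in a $\delta$-hyperbolic space upgrades these local bounds to global constants $\lambda(\varepsilon),\alpha(\varepsilon)$, completing the proof. The main obstacle is precisely the angle control in the inductive step: a short arc of length close to $\varepsilon$, flanked by long arcs, produces a comparison angle $\arctan(1/\sinh(\varepsilon))$ which approaches $\pi/2$ as $\varepsilon \to 0$, and naive iteration would accumulate prohibitive angle losses. The resolution is to use condition~(2) systematically to straighten toward the long-arc side at each step, and to invoke the local-to-global principle on subpaths short enough that only boundedly many iterated straightenings are performed before the global conclusion takes over.
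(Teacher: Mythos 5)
First, note that the paper does not actually prove Proposition~\ref{qua}: it is quoted from \cite[Proposition 7.3]{KL1}, so there is no in-paper argument to compare yours against. Your overall strategy --- iterate Lemma~\ref{lem:RA-triangle} to get a local displacement bound, control the angle at each straightened vertex by hyperbolic comparison, and finish with the local-to-global theorem for quasigeodesics in a $\delta$-hyperbolic space --- is the standard route and is essentially the one taken in the cited source, so the skeleton of the plan is sound.

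There is, however, a genuine soft spot at exactly the step you flag as the main obstacle, and your proposed resolution does not work as stated. In the long--short configuration, ``straightening toward the long-arc side'' does \emph{not} make the angle deviation small. If $\gamma_{i+1}$ is short ($|\gamma_{i+1}|\ge\varepsilon$) and you replace $\gamma_{i+1}\ast\gamma_{i+2}$ by the chord $[p_i,p_{i+2}]$, the deviation you must control at $p_i$ is $\angle p_{i+1}p_ip_{i+2}$, whose comparison bound is $\arctan\bigl(\tanh(|\gamma_{i+2}|)/\sinh(|\gamma_{i+1}|)\bigr)\le\arctan\bigl(1/\sinh(\varepsilon)\bigr)$: the \emph{adjacent} leg of the comparison triangle is the short arc, so this deviation is close to $\pi/2$, not to $0$, no matter how large $L$ is. What saves the argument is weaker than what you assert: the resulting angle is still bounded below by the positive constant $\theta_0=\arctan(\sinh\varepsilon)$, and the generalized form of Lemma~\ref{lem:RA-triangle} for angle $\ge\theta_0$ yields an additive loss $C(\varepsilon)=2\cosh^{-1}\bigl(1/\sin(\theta_0/2)\bigr)$ that is uniformly bounded but by no means small, so your ``$C(\eta)\to 0$ as $\eta\to 0$'' mechanism is the wrong regime here. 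Moreover, a short arc contributes only $\varepsilon$ of length while costing $C(\varepsilon)$ additively, so the loss must be paid for by the flanking long arcs guaranteed by condition~(2): you must choose $L(\varepsilon)$ large compared with $C(\varepsilon)$, not merely large enough to make the long--long deviations summable. With that correction (and the observation that after absorbing the adjacent long arc the deviation at its far endpoint is again exponentially small in $L$, so these errors do not propagate down the induction), the argument closes as you describe; but as written the plan would stall at the first short arc.
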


\subsection{Ping-pong}

\begin{proposition}
\label{free group 1}
Suppose that $g, h\in \Isom(X)$ are parabolic/hyperbolic  elements with equal translation lengths $\leq \varepsilon/10$, and 
\begin{equation}
d(\Hull(\TT(g)), \Hull(\TT(f)))\ge L,
\end{equation}
where $L=L(\varepsilon/10)$ is as in Proposition \ref{qua}. 
Then $\Phi:=\langle g, h\rangle < \Isom(X)$ is a free subgroup of rank 2.  
\end{proposition}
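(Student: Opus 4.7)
The plan is to verify the conclusion via a ping-pong argument, carried out through the local-to-global quasigeodesic criterion of Proposition~\ref{qua}. It suffices to show that for every nonempty reduced word $w = s_r s_{r-1}\cdots s_1$ in $g,h$ (each $s_i$ a nonzero power of $g$ or of $h$, with consecutive syllables of opposite type) there is a base point $x_0\in X$ with $wx_0\ne x_0$. I will exhibit such an $x_0$ by constructing a piecewise geodesic $\gamma$ joining $x_0$ to $wx_0$ whose structure forces it to be a $(\lambda,\alpha)$-quasigeodesic.

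Set $Y_g := \Hull(\TT(g))$ and $Y_h := \Hull(\TT(h))$. By Lemma~\ref{lem3.1} these are closed convex subsets of $X$, preserved by $\langle g\rangle$ and $\langle h\rangle$ respectively, and the hypothesis gives $d(Y_g, Y_h)\ge L$. Since $X$ is $\mathrm{CAT}(-1)$ there is a unique shortest geodesic $\alpha_0 = [p,q]$ realizing this distance, with $p\in Y_h$, $q\in Y_g$, $|\alpha_0|\ge L$, meeting $Y_g$ and $Y_h$ orthogonally at its endpoints (closest-point projection onto a convex subset of a $\mathrm{CAT}(-1)$ space is perpendicular at its foot). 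Without loss of generality $s_1$ is a power of $g$; take $x_0 = q$.

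I build $\gamma$ inductively syllable by syllable; writing $\sigma_i := s_i s_{i-1}\cdots s_1$, at the end of stage $i$ the path has reached the orbit point $\sigma_i q$, which lies in the $\Phi$-translate $\sigma_i Y_g$. To append the $(i+1)$-st block, I insert (in order) a \emph{bridge arc} that is a $\Phi$-translate of $\alpha_0$ (of length $|\alpha_0|\ge L$, perpendicular at its endpoints to the two tubes it joins), an \emph{in-tube arc}, a geodesic segment inside a $\Phi$-translate of $Y_g$ or $Y_h$ along which an appropriate conjugate of $s_{i+1}$ acts by translation, and a second bridge arc returning the path to the next orbit point $\sigma_{i+1} q$. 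The result is a piecewise geodesic $\gamma$ from $q$ to $wq$ whose arcs alternate between long bridges (length $\ge L$) and shorter in-tube segments, with junction angles $\ge \pi/2$ by the perpendicularity mentioned above. Proposition~\ref{qua} (applied with parameter $\varepsilon/10$) then certifies that $\gamma$ is a $(\lambda,\alpha)$-quasigeodesic and hence has positive length growing linearly in $r$; in particular $wq\ne q$ for every nonempty reduced $w$, proving that $\Phi = \langle g, h\rangle$ is free of rank $2$.

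The main obstacle is the combinatorial-geometric bookkeeping behind the inductive construction, specifically tracking which $\Phi$-translate of $Y_g$ or $Y_h$ the partial orbit lies in after each syllable, and verifying that the required bridges can consistently be chosen as $\Phi$-translates of $\alpha_0$ perpendicular to both adjacent tubes; this uses the equivariance of closest-point projection under the $\Phi$-action on the family of tubes. A secondary difficulty is that an individual in-tube arc may have length below the $\varepsilon/10$-threshold required by Proposition~\ref{qua}; this is to be addressed either by choosing $x_0$ off the axes to guarantee sufficient displacement at each stage, or by absorbing such short arcs into the adjacent bridges and invoking $\mathrm{CAT}(-1)$ hyperbolicity to preserve the right-angle hypothesis after geodesification.
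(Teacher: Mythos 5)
Your overall architecture is the paper's: both arguments take the shortest geodesic between the two convex hulls, transport it by the successive prefixes of the word, and feed the resulting alternating concatenation of long ``bridge'' arcs and short ``in-tube'' arcs into the local-to-global criterion of Proposition \ref{qua}. But the issue you defer as a ``secondary difficulty'' --- a uniform positive lower bound on the lengths of the in-tube arcs --- is the one genuinely quantitative point of the proof, and neither of your proposed remedies is carried out or would work as stated. Proposition \ref{qua} needs every arc to have length at least $\varepsilon/10$ or at least $L$; with no lower bound on the in-tube displacements the criterion simply does not apply. ``Absorbing'' a short arc into the adjacent bridges destroys the right-angle hypothesis at the junctions, which is exactly what the orthogonality of the shortest segment to the convex sets was providing; and ``choosing $x_0$ off the axes'' is not available, because the angle condition forces the vertices of the path to be (translates of) the feet of that shortest segment.

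The missing observation (inequality \eqref{eq:tl2} in the paper) is that the foot $y$ of the shortest segment from $\Hull(\TT(g))$ to $\Hull(\TT(h))$ automatically satisfies $d(y,g^iy)\ge \varepsilon/10$ for every $i\neq 0$: being a closest point of the hull, $y$ is not in the interior of $\TT(g)$, hence not in the interior of any $T_{\varepsilon}(g^i)$ with $1\le i\le m_g$, so $d(y,g^iy)\ge\varepsilon$ for those $i$; while for $i>m_g$ one has $d(y,g^iy)\ge\tau(g^i)=i\,\tau(g)>\varepsilon/10$ by the very definition \eqref{eq:mg} of $m_g$ (in the parabolic case every nonzero power enters the definition of $\TT(g)$ and the bound $\ge\varepsilon$ holds outright). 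This is precisely why the Margulis tube is defined as the union over $1\le i\le m_g$ and why the hypothesis $\tau\le\varepsilon/10$ appears in the statement. With this bound supplied, your construction goes through essentially as the paper's (modulo a minor bookkeeping slip: each syllable contributes one bridge and one in-tube arc, not two bridges).
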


\begin{proof}
To simplify notation, for a non-elliptic element $f\in\Isom(X)$, we denote $\Hull(\TT(f))$ by
$\TH(f)$.

Using Lemma \ref{lem3.1}, (\ref{eq3.1}), and the definition of $\TT$, we obtain
$$d(\TH(g), g^{k} \TH(h))=d(\TH(g), \TH(h))\ge L, k\in \Z.$$

Our goal is to show that  every  nonempty word $w(g,h)$ represents a nontrivial element of $\Isom(X)$. It suffices to consider cyclically 
reduced words $w$ which are not powers of $g, h$. 

\medskip
We will consider a cyclically reduced word  
\begin{equation}\label{eq:w}
 w =w(g,h) = g^{m_k}h^{m_{k-1}}g^{m_{k-2}}h^{m_{k-3}} \dots g^{m_{2}}h^{m_1}, 
 \end{equation}
words with the last letter $g$ are treated by relabeling.  Since $w$ is cyclically reduced and is not a power of $g, h$, 
the number $k$ is $\ge 2$ and  all of the $m_i$'s in this equation are nonzero.

For each $N\ge 1$ we define the {\em $r$-suffix} of $w^N$ as the following subword of $w^N$:  
\begin{equation}\label{eq:wr}
w_r = \left\{
\begin{array}{rl}
g^{m_r}h^{m_{r-1}}g^{m_{r-2}}h^{m_{r-3}} \dots g^{m_{2}}h^{m_1}, & r\text{ even}\\
h^{m_{r}}g^{m_{r-1}}h^{m_{r-2}} \dots g^{m_{2}}h^{m_1}, & r\text{ odd}\\
\end{array}\right.
\end{equation}
where, of course, $m_i\equiv m_j$ modulo $N$.  Since $w$ is  reduced,   each $w_r$ is reduced as well.

We will prove that the map
\[\Z \rightarrow X, \quad N\mapsto w^N x\]
is a quasiisometric embedding. This will imply that $w(g,h)$ is nontrivial. In fact, this will also show that $w(g,h)$ is hyperbolic, see Theorem 
\ref{thm:classification-of-isometries}.

\medskip 
Let $l=yz$ be the unique shortest  geodesic segment connecting points in $\TH(g)$ and $\TH(h)$,  where 
 $y\in \TH(g)$ and $z\in \TH(h)$. For $r\ge 0$, we denote $w_r l$, $w_{r} y$ and $w_{r}z$ 
by $l_r$, $y_r$ and $z_r$, respectively.   In particular, $y_0=y, z_0=z$ and $l_0=l$.

Since $l$ is the shortest segment between  $\TH(g), \TH(h)$  and these are convex subsets of $X$, for every 
$y' \in \TH(g)$ (resp. $z'\in \TH(h))$,
\begin{equation}\label{eq:angle}
\angle y'yz \ge \pi/2, \quad 
(\text{resp. } \angle yzz' \ge \pi/2).
\end{equation}

Since $g$ and $h$ have equal translation lengths, $h$ is parabolic (resp. hyperbolic) if and only if $g$ is parabolic (resp. hyperbolic). 
When both of them are hyperbolic, since $y$ and $z$ are not in the interior of $\TT(g)$ and $\TT(h)$, respectively, $d(y, g^i y),d(z,h^j z) \ge \ve$, for all $1\le i\le m_g$, $1\le j\le m_h$. Also, when $i> m_g$, $j>m_h$, it follows from (\ref{eq:tl}) and (\ref{eq:mg}) that
\[
\min\left( d(y, g^i y),d(z,h^j z)\right) \ge \frac{\ve}{10}.
\]
Moreover, when both $g$ and $h$ are parabolic, $d(y, g^i y),d(z,h^j z) \ge \ve$, for all $1\le i$, $1\le j$.
Therefore, in the general case,
\begin{equation}\label{eq:tl2}
\min\left( d(y, g^i y),d(z,h^j z)\right)  \ge \frac{\ve}{10}, \quad \forall i\ge1, \forall j\ge 1.
\end{equation}

\begin{figure}[H]
\centering
\includegraphics[width=5in]{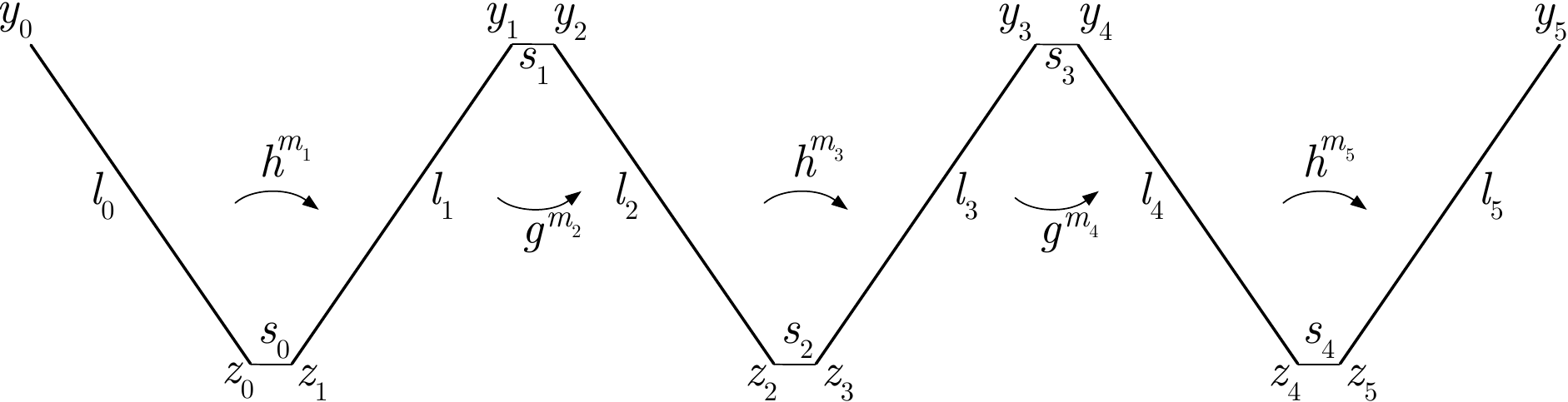}
\caption{}\label{fig:3.1}
\end{figure}

Let $s_r$ be the segment 
\[
s_r = \left\{
\begin{array}{rl}
y_ry_{r+1}, & \text{when }r\text{ is odd}\\
z_rz_{r+1}, & \text{when }r\text{ is even}\\
\end{array}\right..
\]
See the arrangement of the points and segments in Figure \ref{fig:3.1}.

Let $\tilde{l}_N$ be the concatenation of the segments $l_r$'s and $s_r$'s as shown in Figure \ref{fig:3.1}, $0\le r\le kN$. 
According to  (\ref{eq:tl2}), the length of each segment $s_r$ is at least $\ve/10$, while by the assumption, the length of each $l_r$ is $\ge L=L(\varepsilon/10)$. Moreover, according to (\ref{eq:angle}), the 
angle between any two consecutive segments in $\tilde{l}_N$ is at least $\pi/2$. 
Using Proposition \ref{qua}, we conclude that $\tilde{l}_N$ is a $(\lambda,\alpha)$-quasigeodesic.

Consequently,
\begin{equation}
\label{eq:qiw}
d(w^{N}x, x) \ge \frac{1}{\lambda} 
\left( \sum_{i=0}^{kN-1} |s_i| + NkL\right)  - \alpha 
\ge \frac{kL}{\lambda} N  - \alpha. 
\end{equation}
From this inequality it follows that the map $\Z\to X, N\mapsto w^Nx$ is a quasiisometric embedding.  
\end{proof}

\begin{remark}
In fact, this proof also shows that every nontrivial element of the subgroup $\Phi< \Isom(X)$ is either conjugate to one of the generators or is 
hyperbolic.  
\end{remark}

For the next proposition and the subsequent remark, 
one needs the notions of {\em convex-cocompact} and {\em geometrically finite} subgroups of $\Isom(X)$. We refer  to \cite{Bo2} for several equivalent definitions, see also \cite[section 1]{Kapovich-Leeb}. For now, it suffices to say that a subgroup $\Ga$ in $\Isom(X)$ is {\em convex-cocompact} if it is finitely generated and for some (equivalently, every) $x\in X$, the orbit map $\Ga \to \Ga x\subset X$ is a quasiisometric embedding, where $\Ga$ is equipped with a word metric.

\begin{proposition}\label{prop:gf-hyp}
Let $g,h\in \Isom(X)$ be hyperbolic isometries satisfying the hypothesis of Proposition \ref{free group 1}. Then the subgroup $\Phi=\langle g,h\rangle< \Isom(X)$ is convex-cocompact.
\end{proposition}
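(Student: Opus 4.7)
The plan is to extend the quasigeodesic construction of Proposition \ref{free group 1} from powers of a single cyclically reduced word to arbitrary reduced words in $\Phi$, and to exploit hyperbolicity of $g$ and $h$ to upgrade the per-syllable lower bound on orbit arcs from $\varepsilon/10$ to one that is linear in the exponent. This will show that the orbit map $\Phi \to X$ is a quasi-isometric embedding, which, per the definition of convex-cocompactness recalled in the paper, is precisely the desired conclusion.

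Fix a nontrivial $\phi \in \Phi$ and write it as a reduced word $\phi = L_k L_{k-1} \cdots L_1$ with syllables $L_i \in \{g^{n_i}, h^{n_i}\}$ alternating in type, so that its word length in $\Phi \cong F_2$ is $|\phi|_w = \sum_{i=1}^{k} |n_i|$. Let $\phi_r := L_r L_{r-1} \cdots L_1$ denote its $r$-suffix (playing the role of $w_r$ in Proposition \ref{free group 1}), and set $y_r := \phi_r y$, $z_r := \phi_r z$, $l_r := \phi_r(l)$, with $l = yz$ the shortest bridge between $\TH(g)$ and $\TH(h)$. Concatenating the bridges $l_r$ with orbit arcs $s_r$ (joining consecutive $y_r, y_{r+1}$ or $z_r, z_{r+1}$ depending on parity) produces a piecewise-geodesic path $\gamma_\phi$ from $y$ to $\phi y$. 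The length and angle hypotheses of Proposition \ref{qua} are verified verbatim as in the proof of Proposition \ref{free group 1}: each $l_r$ has length $\ge L$; each $s_r$ has length $\ge \varepsilon/10$; and the $\ge \pi/2$ angle condition at each vertex follows by translating the inequalities (\ref{eq:angle}) through the isometry $\phi_r$. Hence $\gamma_\phi$ is a $(\lambda, \alpha)$-quasigeodesic with universal constants.

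The new input is a stronger lower bound on each orbit arc. Since $y_{r+1} = L_{r+1} y_r$ (resp.\ $z_{r+1} = L_{r+1} z_r$) and $L_{r+1}$ is a hyperbolic isometry of $X$ with translation length $\tau(L_{r+1}) = |n_{r+1}| \tau(g)$ or $|n_{r+1}| \tau(h)$, setting
$$\tau_0 := \min\bigl(\tau(g),\, \tau(h)\bigr) > 0,$$
the defining inequality $d(p, fp) \ge \tau(f)$, valid for every $p \in X$ and $f \in \Isom(X)$, gives
$$|s_r| \ge \tau(L_{r+1}) \ge |n_{r+1}| \tau_0.$$
Summing over $r$ yields $\sum_{r=0}^{k-1} |s_r| \ge \tau_0 |\phi|_w$. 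Combined with the quasigeodesic estimate for $\gamma_\phi$,
$$d(y, \phi y) \ge \frac{1}{\lambda}\left(\sum_{r=0}^{k-1} |s_r| + kL\right) - \alpha \ge \frac{\tau_0}{\lambda} |\phi|_w - \alpha.$$
Absorbing $2d(x,y)$, we obtain $d(x, \phi x) \ge A |\phi|_w - B$ with constants $A, B > 0$ independent of $\phi$. Together with the trivial Lipschitz upper bound on the orbit map, this shows $\Phi \to X$, $\phi \mapsto \phi x$, is a quasi-isometric embedding; hence $\Phi$ is convex-cocompact.

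The main obstacle, beyond the linear per-syllable lower bound just established, is the combinatorial verification that $\gamma_\phi$ meets the hypotheses of Proposition \ref{qua} for every reduced word $\phi$, not merely for powers of a single cyclically reduced word; this reduces by equivariance under $\phi_r$ to the base case already treated in Proposition \ref{free group 1}.
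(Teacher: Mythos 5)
Your proposal is correct and follows essentially the same route as the paper: reuse the piecewise-geodesic path from Proposition \ref{free group 1} for an arbitrary reduced word, and use hyperbolicity of $g,h$ to lower-bound each orbit arc $s_r$ by $|n_{r+1}|\tau$ via \eqref{eq:tl}, so that \eqref{eq:qiw} yields $d(\phi y, y)\ge \frac{\tau}{\lambda}|\phi|_w-\alpha$ and the orbit map is a quasi-isometric embedding. The only (harmless) cosmetic difference is that you write $\tau_0=\min(\tau(g),\tau(h))$ where the paper uses $\tau=\tau(g)=\tau(h)$, which are equal under the hypotheses of Proposition \ref{free group 1}.
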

\begin{proof} We equip the free group $\F_2$ on two generators  (denoted $g, h$) with the word metric 
corresponding to this free generating set. Since $g,h$ are hyperbolic, by (\ref{eq:tl}) the lengths of the segments $s_r$'s in the proof of 
Proposition \ref{free group 1} are $\ge \tau |m_{r+1}|$, where
\[\tau = \tau(g)=\tau(h).\] Then, for $N=1$, $r=k$, and a reduced but not necessarily cyclically reduced word $w$, the inequality (\ref{eq:qiw}) becomes 
\begin{equation}
d(wy,y)\ge   \frac{1}{\lambda} 
\left( \sum_{i=0}^{k-1} |s_i|\right)  - \alpha \ge  \frac{\tau}{\lambda}  |w| -\alpha,
\end{equation}
where $|w| \ge  |m_1| + |m_2| + \dots + |m_{k}|$ is the (word) length of $w$.  
 Therefore, the orbit map $\F_2  \rightarrow \Phi y \subset X$ is a quasiisometric embedding.
\end{proof}

\begin{rem}
One can also show that if $g, h$ are parabolic then the subgroup $\Phi$ is geometrically finite. We will not prove it in this paper since a proof requires further geometric background material on geometrically finite groups. 
\end{rem}

\section{Case 1: Displacement bounded below} \label{sec:Case1} 

In this section we consider discrete nonelementary subgroups of $\Isom(X)$ generated by two hyperbolic elements $g, h$ whose translation lengths are equal to $\tau\ge \la$.  Our goal is to show that in this case the subgroup $\langle g^N, h^N\rangle$ is free of rank 2 provided that $N$ is greater than some constant depending only on the Margulis constant of $X$ and on $\la$. 
The strategy is to bound from above how `long'  
 the axes $A_g, A_h$ of $g$ and $h$ can stay `close to each other' in terms of the constant $\la$. Once we get such an estimate, we find a uniform upper bound on $N$ such that Dirichlet domains for $\langle g^N\rangle, \langle h^N\rangle$ (based at some points on $A_g, A_h$)   have disjoint complements.  This implies that $g^N, h^N$ generate a free subgroup of rank two by a classical ping-pong argument.  

\medskip
Let $\alpha, \beta$ be complete geodesics in the Hadamard manifold $X$. These geodesics eventually will be the axes of $g$ and $h$, hence, 
we assume that these geodesics do not share ideal end-points. Let $x_- x_+$ denote the (nearest point) projection of $\beta$ to 
$\alpha$ and let $y_-y_+$ denote the projection of  $x_- x_+$ to $\beta$.  
Let $x, y$ denote the mid-points of $x_-x_+$ and $y_-y_+$ respectively. 

Then 
$$
L_\beta:= d(y_-, y_+)\le L_\al:=d(x_-, x_+). 
$$

Fix some $T\ge 0$, and let $\hat{x}_- \hat{x}_+$ and 
$\hat{y}_- \hat{y}_+$ denote the subsegments of $\al$ and $\beta$ containing $x_- x_+$  and  $y_-y_+$ respectively, such that 
\begin{equation}\label{eq:T}
d(x_\pm, \hat{x}_\pm)=T,\quad  d(y_\pm, \hat{y}_\pm)=T. 
\end{equation}
We let $U_\pm$ and $V_\pm$ denote the `half-spaces' in $X$ equal to $H(\hat{x}_\pm, x_\pm)$ and 
$H(\hat{y}_\pm, y_\pm)$ respectively. See Figure \ref{figure2.fig}. 

\begin{figure}[h]
\begin{center}
\begin{tikzpicture}
    \node[anchor=south west,inner sep=0] (image) at (0,0,0) {\includegraphics[width=3in]{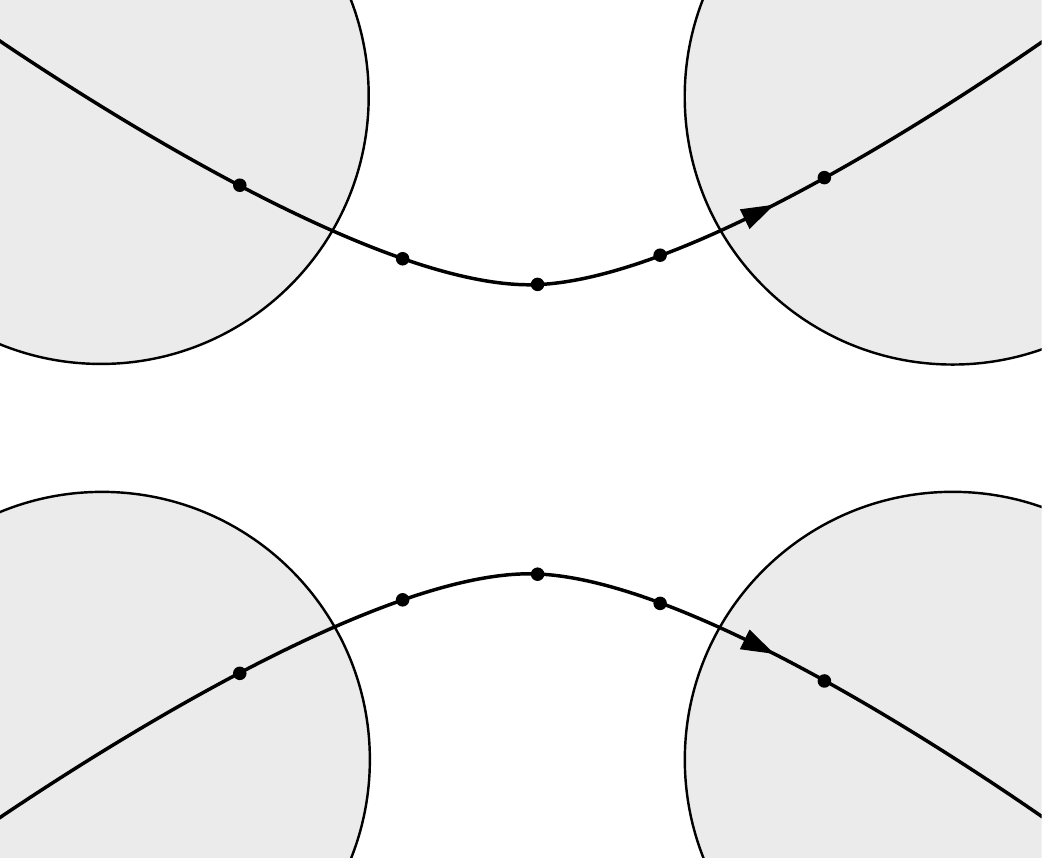}};
    \begin{scope}[x={(image.south east)},y={(image.north west)}]
        \node at (.52,.71) {$x$};
        \node at (.395,.73) {$x_-$};
        \node at (.635,.74) {$x_+$};
        \node at (.795,.838) {$\hat{x}_+$};
        \node at (.235,.825) {$\hat{x}_-$};
        \node at (.05,.7) {$U^-$};
        \node at (.95,.7) {$U^+$};
        \node at (.73,.71) {$g$};
        \node at (.03,.97) {$\alpha$};
        \node at (.52,.28) {$y$};
        \node at (.395,.26) {$y_-$};
        \node at (.635,.26) {$y_+$};
        \node at (.795,.16) {$\hat{y}_+$};
        \node at (.235,.15) {$\hat{y}_-$};
        \node at (.05,.3) {$V^-$};
        \node at (.95,.3) {$V^+$};
        \node at (.73,.29) {$h$};
        \node at (.03,.12) {$\beta$};
    \end{scope}
\end{tikzpicture}
\end{center}
\caption{}
\label{figure2.fig}
\end{figure}

The following is proven in \cite[Appendix]{BCG}:

\begin{lemma}\label{lem:BCG}
If $T\ge 5$ then the sets $U_\pm$, $V_\pm$ are pairwise disjoint. 
\end{lemma}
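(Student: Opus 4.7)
The plan is to verify each of the six pairwise disjointness assertions. Using the obvious $\pm$-relabeling symmetry on each geodesic and the $\alpha \leftrightarrow \beta$ symmetry (which swaps the $U$'s with the $V$'s), the essential cases reduce to three: (A) $U_+ \cap U_- = \emptyset$, (B) $U_+ \cap V_+ = \emptyset$, and (C) $U_+ \cap V_- = \emptyset$. Each case proceeds by assuming a point $z$ lies in the intersection and deriving a contradiction via the CAT$(-1)$ right-angle estimate of Lemma \ref{lem:RA-triangle}.

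For case (A): Let $p = P_\alpha(z)$ be the nearest-point projection of $z$ to $\alpha$. Since $p$ is the projection, the angle at $p$ in any triangle $\triangle z p r$ with $r \in \alpha$ is $\ge \pi/2$, so Lemma \ref{lem:RA-triangle} gives $d(z, r) \ge d(z, p) + d(p, r) - 2\delta$. Combining this (for $r = \hat{x}_+$) with the triangle inequality $d(z, x_+) \le d(z, p) + d(p, x_+)$ and the hypothesis $d(z, \hat{x}_+) \le d(z, x_+)$, one obtains $d(p, \hat{x}_+) \le d(p, x_+) + 2\delta$, which forces the parameter $t_p$ of $p$ along $\alpha$ to satisfy $t_p \ge m_+ - \delta$. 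Applying the same argument at the other end yields $t_p \le m_- + \delta$. Since $d(m_-, m_+) = L_\alpha + T \ge T \ge 5 > 2\delta$, these are incompatible.

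For cases (B) and (C): Letting $q = P_\beta(z)$, the argument of case (A) applied on $\beta$ shows that $z \in V_+$ (resp.\ $V_-$) forces $q$ past $n_+ - \delta$ (resp.\ $n_- + \delta$) on $\beta$, where $n_\pm$ is the midpoint of $y_\pm \hat{y}_\pm$. The defining property $[x_-, x_+] = P_\alpha(\beta)$ now enters: since $q \in \beta$, we have $P_\alpha(q) \in [x_-, x_+]$, so the 1-Lipschitz projection gives $d(p, P_\alpha(q)) \le d(z, \beta)$. The condition $z \in U_+$ places $p$ at parameter $\ge m_+ - \delta > x_+$ (using $T \ge 5 > 2\delta$), forcing $d(p, P_\alpha(q)) \ge T/2 - \delta$, and hence $d(z, \beta) \ge T/2 - \delta$; by the same token $d(z, \alpha) \ge T/2 - \delta$. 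I then feed these lower bounds back into Lemma \ref{lem:RA-triangle} applied to the triangles $\triangle z p x_+$, $\triangle z q y_\pm$, and the quadrilateral with vertices $x_+, y_+, \hat x_+, \hat y_\pm$ (whose consecutive edges meet at angles $\ge \pi/2$ from the double-projection structure), to convert the two separate $\alpha$- and $\beta$-projection bounds into a single inequality in $T$ and $\delta$ that fails for $T \ge 5$.

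The main obstacle is the final step of cases (B)--(C): converting the pair of projection bounds into a contradiction requires a careful quadrilateral estimate in which the ``missing'' side $x_+ y_+$ has length $d(\alpha, \beta)$ and is perpendicular to $\beta$ at $y_+$ but \emph{not} necessarily perpendicular to $\alpha$ at $x_+$ (since $x_+$ is only a limit projection). Each invocation of Lemma \ref{lem:RA-triangle} pays a $2\delta = 2\cosh^{-1}(\sqrt 2)$ cost, and the buffer $T \ge 5 > 4\delta \approx 3.53$ is precisely what absorbs the cumulative slack; this is the computation carried out explicitly in \cite[Appendix]{BCG}.
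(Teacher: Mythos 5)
First, note that the paper does not actually prove this lemma: it is stated with the attribution ``The following is proven in \cite[Appendix]{BCG}'' and no argument is given. So your attempt is being measured against the cited computation rather than against an in-paper proof. Your case (A) (and its copy on $\beta$, giving $V_+\cap V_-=\emptyset$) is correct and complete: the nearest-point projection $p$ of $z$ to $\alpha$ makes an angle $\ge\pi/2$ with every direction along $\alpha$, so Lemma \ref{lem:RA-triangle} pins $p$ past the midpoint of $x_+\hat{x}_+$ up to a $\delta$-error, and $T\ge 5>2\delta=2\cosh^{-1}(\sqrt2)\approx 1.76$ makes the two constraints incompatible.

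The cross cases $U_\pm\cap V_\pm$, however, contain a genuine gap, and they are the entire content of the lemma. Two specific problems. (1) The step ``by the same token $d(z,\alpha)\ge T/2-\delta$'' is not symmetric to the step that gave $d(z,\beta)\ge T/2-\delta$: the latter used that the projection of \emph{all of} $\beta$ to $\alpha$ is the segment $x_-x_+$, whereas $y_-y_+$ is defined only as the projection of the \emph{segment} $x_-x_+$ to $\beta$, so the projection of $\alpha$ to $\beta$ need not lie in $y_-y_+$ and the point $p$ (which sits beyond $x_+$) may project to $\beta$ anywhere, possibly past $\hat{y}_+$. This asymmetry is exactly why the construction in the paper projects twice, and it has to be confronted, not mirrored away. (2) Even granting both lower bounds, the conclusion is only described (``convert the two separate projection bounds into a single inequality in $T$ and $\delta$ that fails for $T\ge 5$'') without the inequality ever being written down or verified; the claimed quadrilateral $x_+y_+\hat{x}_+\hat{y}_\pm$ does not have all angles $\ge\pi/2$ (you concede the angle at $x_+$ is uncontrolled), so it is not clear what estimate is being invoked. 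Since you yourself identify this as ``the main obstacle'' and defer it to \cite[Appendix]{BCG}, the proposal is, like the paper, ultimately a citation for the hard part --- but unlike the paper it is presented as a proof, and as written the decisive step is missing. To complete it one would need the quantitative statement that a point whose projection to $\alpha$ lies a definite distance beyond $x_+$ cannot simultaneously have its projection to $\beta$ a definite distance beyond $y_+$, which is precisely the divergence estimate for the two rays $\alpha|_{[x_+,\infty)}$ and $\beta|_{[y_+,\infty)}$ carried out in the BCG appendix.
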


Suppose now that $g, h$ are hyperbolic isometries of $X$ with the axes $\al, \beta$ respectively, and equal translation length $\tau(g)=\tau(h)=\tau \ge \la>0$.  We let $\Ga=\langle g, h\rangle < \Isom(X)$ denote the,   necessarily nonelementary (but not necessarily discrete),  
 subgroup of isometries of $X$ generated by $g$ and $h$.

As an application of the above lemma, as in \cite[Appendix]{BCG}, we obtain:

\begin{lemma}
If $N\tau\ge L_\alpha + 5 + 2\delta$
then the half-spaces $H(g^{\pm N}x, x)$, $H(h^{\pm N}y, y)$ are pairwise disjoint.
\end{lemma}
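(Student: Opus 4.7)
The plan is to apply Lemma \ref{lem:BCG} with $T = 5$, so that the four half-spaces $U_\pm, V_\pm$ are pairwise disjoint. It then suffices to establish the inclusions
\[
H(g^{\pm N} x, x) \subset U_\pm \quad\text{and}\quad H(h^{\pm N} y, y) \subset V_\pm,
\]
each of which will be a direct application of Corollary \ref{cor:2bisectors}.

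To prove $H(g^N x, x) \subset U_+ = H(\hat{x}_+, x_+)$, orient $g$ so that it translates along its axis $\alpha$ toward $x_+$; then $g^N x$ lies on $\alpha$ at distance $N\tau$ from $x$ on the $x_+$-side. Since $N\tau \ge L_\alpha + 5 + 2\delta > L_\alpha/2 + 5 = d(x, \hat{x}_+)$, the point $g^N x$ lies past $\hat{x}_+$, and the four points $x, x_+, \hat{x}_+, g^N x$ occur on $\alpha$ in this order with $d(x, x_+) = L_\alpha/2$, $d(x_+, \hat{x}_+) = 5$, and
\[
d(\hat{x}_+, g^N x) = N\tau - L_\alpha/2 - 5 \;\ge\; L_\alpha/2 + 2\delta = d(x, x_+) + 2\delta.
\]
Corollary \ref{cor:2bisectors}, applied with $x'_+ := g^N x$, then yields $H(x_+, \hat{x}_+) \subset H(x, g^N x)$. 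Passing to the complementary half-spaces across each of the two bisectors converts this into $H(g^N x, x) \subset H(\hat{x}_+, x_+) = U_+$, as desired.

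The remaining three inclusions are proved in exactly the same way: $H(g^{-N} x, x) \subset U_-$ by reversing the orientation of $\alpha$, and $H(h^{\pm N} y, y) \subset V_\pm$ by running the same argument on the axis $\beta$ of $h$. For the latter, the required estimate becomes $N\tau \ge L_\beta + 5 + 2\delta$, which follows automatically from the hypothesis since $L_\beta \le L_\alpha$. Combined with the pairwise disjointness of $U_\pm, V_\pm$ supplied by Lemma \ref{lem:BCG}, this delivers the desired pairwise disjointness of the four half-spaces.

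The main point to check is the flipping step in the second paragraph: Corollary \ref{cor:2bisectors} produces a containment of closed half-spaces, which share their bisector boundaries, so one must confirm the flipped containment on these boundaries as well. This is not a real obstacle, because the $2\delta$-term in the hypothesis $N\tau \ge L_\alpha + 5 + 2\delta$ is precisely what guarantees that the two bisectors $\mathrm{Bis}(x_+, \hat{x}_+)$ and $\mathrm{Bis}(x, g^N x)$ are strictly separated (as in the proof of Corollary \ref{cor:2bisectors}), so no boundary points are lost in the flip. Apart from this small subtlety, the proof is a routine bookkeeping of distances along $\alpha$ and $\beta$.
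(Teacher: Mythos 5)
Your proof is correct and follows essentially the same route as the paper's: apply Corollary \ref{cor:2bisectors} with $T=5$ to the four quadruples $(x,x_\pm,\hat{x}_\pm,g^{\pm N}x)$ and $(y,y_\pm,\hat{y}_\pm,h^{\pm N}y)$, and then invoke Lemma \ref{lem:BCG}. The complementation ("flipping") step you flag is passed over silently in the paper, and your distance bookkeeping, including the reduction of the $\beta$-case to $L_\beta \le L_\alpha$, matches the paper's argument exactly.
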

\proof The inequality  
$$
N\tau\ge L_\alpha + 5 + 2\delta\ge L_\beta + 5 + 2\delta.
$$ 
implies that the quadruples 
$$
(x, x_+, \hat{x}_+,  g^N(x)), (x, x_-, \hat{x}_-,  g^{-N}(x)), (y, y_+, \hat{y}_+,  h^N(y)), (y, y_-, \hat{y}_-,  h^{-N}(y))
$$
satisfy the assumptions of Corollary \ref{cor:2bisectors} where $\hat{x}_\pm$ and $\hat{y}_\pm$ are given by taking $T=5$ in (\ref{eq:T}). Therefore, according to this corollary, we have
$$
H(g^{\pm N}(x), x)\subset U^\pm, \quad H(h^{\pm N}(y), y)\subset V^\pm. 
$$
Now, the assertion of the lemma follows from Lemma \ref{lem:BCG}.  \qed

\begin{cor}\label{cor:N-in}
If 
\begin{equation}\label{eq:N-in}
N\tau \ge L_\alpha + 5 + 2\delta
\end{equation}
 then the subgroup $\Ga_N< \Ga$ generated by $g^N, h^N$ is free with the basis $g^N, h^N$.  
\end{cor}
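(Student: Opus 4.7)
The plan is to deduce Corollary \ref{cor:N-in} from the preceding lemma via the classical Klein ping-pong argument. Write $A^\pm := H(g^{\pm N}x, x)$ and $B^\pm := H(h^{\pm N}y, y)$. Under the hypothesis (\ref{eq:N-in}), the preceding lemma tells us that these four closed half-spaces are pairwise disjoint, which is exactly the input the ping-pong lemma requires.

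The first step is to verify the ping-pong property of these half-spaces. If $p \notin A^-$, then $d(p,x) < d(p, g^{-N}x)$; since $g^N$ is an isometry, applying $g^N$ yields $d(g^N p, g^N x) < d(g^N p, x)$, which says $g^N p \in A^+$. Thus $g^N(X\setminus A^-)\subset A^+$, and symmetrically $g^{-N}(X\setminus A^+)\subset A^-$, together with $h^{\pm N}(X\setminus B^\mp)\subset B^\pm$. In other words, each of $g^{\pm N}, h^{\pm N}$ sends the union of the three half-spaces other than its ``bad'' one into its ``good'' one.

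The second step is the standard ping-pong induction. Let $w = s_k s_{k-1}\dots s_1$ be a reduced nontrivial word in $g^{\pm N}, h^{\pm N}$, and pick a starting point $p$ in one of the four half-spaces different from the bad one for $s_1$ (for instance $p = h^N y \in B^+$ when $s_1 \in \{g^{\pm N}\}$, and $p = g^N x \in A^+$ when $s_1 \in \{h^{\pm N}\}$). By induction on $i$, each partial image $s_i\dots s_1 p$ lies in the good half-space of $s_i$; the inductive step uses reducedness of $w$ (that is, $s_{i+1} \neq s_i^{-1}$) to ensure the partial image avoids the bad half-space of $s_{i+1}$. Hence $wp$ lies in the good half-space for $s_k$, which is disjoint from the one containing $p$, so $wp \neq p$ and $w$ represents a nontrivial element of $\Isom(X)$. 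Therefore $\{g^N, h^N\}$ freely generates $\Ga_N$.

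I expect no substantive obstacle here: all the geometric content is packaged into the preceding lemma producing the pairwise disjoint half-spaces, and the remainder of the argument is the textbook combinatorial ping-pong bookkeeping.
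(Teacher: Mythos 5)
Your approach is the same as the paper's: the paper simply records the inclusions $g^{\pm N}\bigl(H(h^{-N}y,y)\cup H(h^{N}y,y)\bigr)\subset H(g^{\pm N}x,x)$ and $h^{\pm N}\bigl(H(g^{-N}x,x)\cup H(g^{N}x,x)\bigr)\subset H(h^{\pm N}y,y)$ and then invokes the standard ping-pong lemma, so your verification of $g^{N}(X\setminus A^{-})\subset A^{+}$ (and its three companions), together with the pairwise disjointness supplied by the preceding lemma, is exactly the intended input. That verification, and the inductive step using $s_{i+1}\neq s_i^{-1}$, are correct.

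However, your endgame has a genuine slip. You choose the base point $p$ only as a function of the first letter $s_1$, and then assert that the good half-space of $s_k$ ``is disjoint from the one containing $p$.'' This is false for some reduced words: take $w=h^{N}g^{N}$, so $s_1=g^{N}$ and your rule gives $p=h^{N}y\in B^{+}$; but $s_k=h^{N}$ has good half-space $B^{+}$, so $wp$ and $p$ both land in $B^{+}$ and disjointness tells you nothing. The repair is standard and short: among the four pairwise disjoint half-spaces at most two are forbidden (the bad half-space of $s_1$ and the good half-space of $s_k$), so at least two remain, and each is nonempty (it contains one of $g^{\pm N}x$ or $h^{\pm N}y$); choose $p$ in one of the remaining ones. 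Then $p\notin\mathrm{bad}(s_1)$ starts your induction, and $p\notin\mathrm{good}(s_k)$ while $wp\in\mathrm{good}(s_k)$ forces $wp\neq p$. With that one-line correction your proof is complete and coincides with the argument the paper delegates to the ``standard ping-pong lemma.''
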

\proof We have 
$$
g^{\pm N} \left( H(h^{-N}(y), y)   ~\cup~ H(h^{+N}(y), y ) \right)\subset H(g^{\pm N}x, x)
$$
and 
$$
h^{\pm N} \left( H(g^{-N}(x), x)   ~\cup~ H(g^{+N}(x), x) \right) \subset H(h^{\pm N}y, y). 
$$
Thus, the conditions of the standard ping-pong lemma (see e.g. \cite{Har, DK}) are satisfied and, hence,  
$\Ga_N$ is free with the basis $g^N, h^N$. \qed 

\medskip
Let $\eta=d(\al,\beta)$ denote the minimal distance between $\al, \beta$ and pick some $\eta_0>0$ (we will eventually take $\eta_0= 0.01 \varepsilon(n,\kappa)$).  
Let $\beta_0= z_-^0 z_+^0\subset \beta$ be the (possibly empty!) maximal closed subinterval  such that the distance from the end-points of $\beta_0$ to $\al$ is $\le \eta_0$.  Thus, $\beta_0 \subset \bar{N}_{\eta_0}(\al)$. 

\begin{rem}
$\beta_0=\emptyset$  if and only if $\eta_0< \eta$. 
\end{rem}

\begin{figure}[h]
\begin{center}
\begin{tikzpicture}
    \node[anchor=south west,inner sep=0] (image) at (0,0,0) {\includegraphics[width=3in]{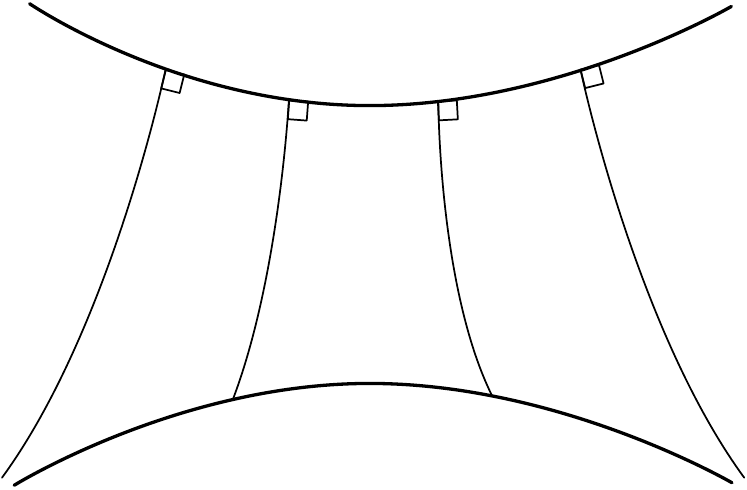}};
    \begin{scope}[x={(image.south east)},y={(image.north west)}]
        \node at (.24,.89) {$x_-$};
	\node at (.78,.905) {$x_+$};
	\node at (.5,.83) {$2L_0$};
	\node at (.3,.78) {$L_1$};
	\node at (.7,.77) {$L_1$};
	\node at (.33,.5) {$\eta_0$};
	\node at (.64,.5) {$\eta_0$};
	\node at (.5,.16) {$\beta_0$};
	\node at (.95,1) {$\alpha$};
	\node at (.93,.0) {$\beta$};
    \end{scope}
\end{tikzpicture}
\end{center}
\caption{}
\label{figure1.fig}
\end{figure}

Let $\al_0= x_-^0 x_+^0$ denote the projection of $\beta_0$ to $\al$, let $2L_0$ denote the length of $\al_0$.  Hence, the intervals $\al_0, \beta_0$ are within Hausdorff distance $\eta_0$ from each other.  

Furthermore, 
$\angle \beta(-\infty) z_-^0 x_-^0\ge \pi/2$ and $\angle \beta(-\infty) z_+^0 x_+^0\ge \pi/2$; 
see Figure \ref{figure1.fig}.  Hence, according to \cite[Corollary 3.7]{KL1}, for 
$$
L_1=\sinh^{-1}\left(  \frac{1}{\sinh(\eta_0)} \right),
$$
we have
$$
d(x_-, x_-^0)\le L_1, \quad d(x_+, x_+^0)\le L_1. 
$$
Thus, the interval $x_-x_+$ breaks into the union of two subintervals of length $\le L_1=L_1(\eta_0)$ and the interval $\al_0$ of the length $2L_0$.  In other words,  $L_\al=2(L_0+L_1)$.   

Most of our discussion below deals with the case when the interval $\beta_0$ is nonempty.

\medskip 
Our goal is to bound from above $L_\al$ in terms of $\la, \eta_0$ and the Margulis constant $\varepsilon(n, \kappa)$ of $X$, provided that 
$\eta_0=0.01\varepsilon(n, \kappa)$ and $\Ga$ is discrete.

\begin{lemma}
Let $S\subset \Ga$ be the subset consisting of elements of word-length $\le 4$ with respect to the generating set $g, h$. 
Let $P_-P_+\subset \al_0$ be the middle  subinterval of $\al_0$ whose length is 
$\frac{2}{9}L_0$. Assume that $\tau\le d(P_-,P_+)$. Then for each $\gamma\in S$ the interval 
$\gamma(P_-P_+)$ is contained in the $3\eta_0$-neighborhood of $\al_0$. 
\end{lemma}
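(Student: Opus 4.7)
The plan is to proceed by induction on the length of the reduced word representing $\gamma$ in $g^{\pm 1}$ and $h^{\pm 1}$. Writing $\gamma = t_k \cdots t_1$ with $k \le 4$ and each $t_i \in \{g^{\pm 1}, h^{\pm 1}\}$, I would track the trajectory $p_i := t_i \cdots t_1(p)$ for a point $p \in P_-P_+$, bounding $d(p_i, \alpha_0)$ at each stage. Two structural facts do most of the work. First, because $g$ preserves the axis $\alpha$ setwise and $h$ preserves $\beta$ setwise, the nearest-point projections are equivariant: $\pi_\alpha \circ g^{\pm 1} = g^{\pm 1} \circ \pi_\alpha$ and $\pi_\beta \circ h^{\pm 1} = h^{\pm 1} \circ \pi_\beta$. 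In particular, $g^{\pm 1}$ preserves $d(\cdot, \alpha)$ and $h^{\pm 1}$ preserves $d(\cdot, \beta)$. Second, by the construction of $\alpha_0$ as $\pi_\alpha(\beta_0)$ together with the defining property of $\beta_0$, the arcs $\alpha_0$ and $\beta_0$ are within Hausdorff distance $\eta_0$, so the projection from any point of $\alpha_0$ onto $\beta$ lands inside $\beta_0$ at distance at most $\eta_0$, and symmetrically.

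Next I would record the baseline: since $p \in P_-P_+$ is the middle subinterval of $\alpha_0$, it sits at distance at least $\frac{8}{9} L_0$ from each endpoint $x_\pm^0$ of $\alpha_0$. The hypothesis $\tau \le \frac{2}{9} L_0$ together with $k \le 4$ bounds the cumulative along-axis translation by $4\tau \le \frac{8}{9} L_0$, so any point obtained from $p$ by a sequence of translations along $\alpha$ (or along $\beta$, after transfer) has its projections strictly inside $\alpha_0$ and $\beta_0$ respectively. This keeps the Hausdorff bound applicable throughout the induction. Then I would argue that a block of consecutive letters of the same type does not increase the distance to the corresponding axis (by the equivariance fact), so the only potentially problematic steps are the switches between $g$-blocks and $h$-blocks.

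At each such switch I would bound the growth of $d(p_i, \alpha_0)$ using a triangle inequality through a convenient companion point on the axis being departed. Starting from $p_0 \in \alpha_0$, after the first $h$-block applied to $p_0$ one transfers to $\beta_0$ via the projection $q_0 = \pi_\beta(p_0)$ (cost $\eta_0$), applies the $h$-power (which stays inside $\beta_0$ by the baseline), and records the new position as lying within $\eta_0$ of a point of $\beta_0$. Repeating this inside later blocks uses equivariance to move the companion point along $\alpha$ or $\beta$ without losing $\eta_0$-closeness. The main obstacle---the crux of the lemma---is verifying that this companion-point trick costs only $\eta_0$ per transition rather than $2\eta_0$: the key observation is that one defers the final projection back to $\alpha_0$ to the very end, so only a single inbound projection is required after all translations are complete. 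With at most two interior switches in a word of length $4$, plus one terminal projection onto $\alpha_0$, the total deviation is bounded by $3\eta_0$, as claimed.
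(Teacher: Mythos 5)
Your overall strategy (companion points on $\alpha_0\cup\beta_0$, equivariance of the nearest-point projections, and the budget argument that $4\tau\le\frac{8}{9}L_0$ keeps every companion inside $\alpha_0$, resp.\ $\beta_0$) is precisely the ``straightforward application of the triangle inequalities'' that the paper has in mind --- its entire proof is that one sentence --- and those ingredients of your outline are all correct. The gap is in the final count. A reduced word of length $4$ can have \emph{three} type-alternations, not two: $ghgh$, applied to $p$ as $h$, then $g$, then $h$, then $g$, switches type at every step. Each alternation genuinely costs $\eta_0$, and the ``defer the final projection'' trick removes none of these costs: $g^{\pm1}$ controls only the distance to $\alpha$ and $h^{\pm1}$ only the distance to $\beta$, so the companion must be transferred to the correct axis \emph{before} every block of the other type; the transfer cannot be postponed across a letter of the other type. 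Tracking $u_i=d(p_i,\alpha)$ and $v_i=d(p_i,\beta)$, with $u_{i+1}=u_i$ under $g^{\pm1}$, $v_{i+1}=v_i$ under $h^{\pm1}$, and $|u_i-v_i|\le\eta_0$ on the relevant region, the word $ghgh$ gives $u_0=0$, $v_1\le\eta_0$, $u_2\le2\eta_0$, $v_3\le3\eta_0$, $u_4\le4\eta_0$: the method delivers $4\eta_0$, not $3\eta_0$.

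Moreover, $4\eta_0$ is essentially sharp, so no bookkeeping will rescue the constant $3\eta_0$: if $g$ and $h$ are glide reflections (or, in higher dimensions, screw motions with rotation angle $\pi$) along $\alpha$ and $\beta$ with $d(\alpha,\beta)=\eta$ close to $\eta_0$, then in the unfolded coordinate along the common perpendicular one computes $d(ghgh\cdot p,\alpha)=4\eta>3\eta_0$. The honest conclusion of your argument is therefore $\gamma(P_-P_+)\subset\bar{N}_{4\eta_0}(\alpha_0)$, and you should state it that way (this also appears to be what the paper's own one-line argument actually yields). The change is harmless downstream: the subsequent estimate $28\times3\eta_0\le100\eta_0$ becomes $28\times4\eta_0=112\eta_0$, and one takes $\eta_0=\varepsilon/112$ in place of $\varepsilon/100$, with the corresponding adjustment of $L_1$. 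Aside from this miscount, your proposal fills in correctly the details the paper leaves implicit.
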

\proof The proof is a straightforward application of the triangle inequalities taking into account the fact that the Hausdorff distance between 
$\al_0$ and $\beta_0$ is $\le \eta_0$.  \qed

\medskip 
Then, arguing as in the proof of \cite[Theorem 10.24]{Kapovich-book}\footnote{In fact, the argument there is a variation on a proof due to Culler-Shalen-Morgan and Bestvina, Paulin}, 
we obtain that each of the commutators
$$
[g^{\pm 1}, h^{\pm 1}], \quad [h^{\pm 1}, g^{\pm 1}]
$$
moves each point of $P_-P_+$ by at most 
$$28\times 3\eta_0\le 100 \eta_0.$$ 
Therefore, by applying the Margulis Lemma as in the  proof of \cite[Theorem 10.24]{Kapovich-book}, we obtain:

\begin{cor}
If  $\Ga$ is discrete and $\eta_0= 0.01 \varepsilon(n,\kappa)$, then 
$$
\tau\ge \frac{2}{9}L_0= \frac{1}{9}(L_\al - 2L_1). 
$$ 
\end{cor}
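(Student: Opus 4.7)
The plan is to argue by contradiction. Suppose $\tau < \frac{2}{9}L_0 = d(P_-, P_+)$; then the hypothesis of the preceding lemma holds, and the paragraph above the corollary gives that every commutator $[g^{\pm 1}, h^{\pm 1}]$ and $[h^{\pm 1}, g^{\pm 1}]$ moves every point of $P_- P_+$ by at most $28 \cdot 3\eta_0 = 84\eta_0 < \varepsilon(n,\kappa)$. Fix any point $p \in P_- P_+$.

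I would then invoke the Margulis Lemma: since $\Gamma$ is discrete, the subgroup
$$
\Gamma_p(\varepsilon(n,\kappa)) := \langle\, \gamma \in \Gamma : d(p, \gamma p) < \varepsilon(n,\kappa) \,\rangle
$$
is virtually nilpotent, hence elementary in the discrete negatively curved Hadamard setting. Thus all eight commutators lie in a common elementary subgroup of $\Gamma$. If $[g, h] = 1$ then $\Gamma = \langle g, h\rangle$ is abelian and hence already elementary, contradicting the standing hypothesis; so $[g,h]\ne 1$. Let $E \subset \Gamma$ denote the unique maximal elementary subgroup containing $[g,h]$, with canonical invariant set $F \subset \bar X$ (its invariant axis, or its unique fixed point at infinity). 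Since any elementary subgroup of $\Gamma$ containing $[g,h]$ is contained in $E$, all eight commutators lie in $E$.

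The main obstacle, as expected, is to propagate ``$E$ is elementary'' to ``$\Gamma$ is elementary''. The lever is the pair of identities
$$
g^{-1}[g,h]g = [h, g^{-1}], \qquad h^{-1}[g,h]h = [h^{-1}, g],
$$
whose right-hand sides lie in $E$ as inverses of two of our commutators. Thus $g^{-1}[g,h]g \in E$ preserves $F$; but the canonical invariant set of $g^{-1}[g,h]g$ is $g^{-1}F$, and uniqueness of the canonical invariant set of a non-trivial non-elliptic isometry forces $g^{-1}F = F$. The analogous argument with the second identity yields $h^{-1}F = F$. Hence $\Gamma = \langle g, h\rangle$ preserves $F$ and is elementary, the desired contradiction. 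The final equality $\frac{2}{9}L_0 = \frac{1}{9}(L_\al - 2L_1)$ in the statement is immediate from the identity $L_\al = 2(L_0 + L_1)$ noted earlier.
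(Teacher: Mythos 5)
Your overall strategy is the one the paper intends: the paper's own ``proof'' of this corollary consists of the displacement bound for the eight commutators followed by the single phrase ``by applying the Margulis Lemma as in the proof of \cite[Theorem 10.24]{Kapovich-book}'', and your reconstruction of that step --- small displacement at $p\in P_-P_+$ puts all the commutators into a virtually nilpotent, hence elementary, subgroup, after which the identities $g^{-1}[g,h]g=[h,g^{-1}]$ and $h^{-1}[g,h]h=[h^{-1},g]$ are used to propagate the invariant set from the commutators to all of $\Gamma$ --- is the standard way to finish. The numerics are fine ($84\eta_0=0.84\,\varepsilon(n,\kappa)<\varepsilon(n,\kappa)$), the case $[g,h]=1$ is handled correctly, and the final identity $\tfrac29 L_0=\tfrac19(L_\alpha-2L_1)$ does follow from $L_\alpha=2(L_0+L_1)$.

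There is, however, a genuine gap in the propagation step: you silently assume that $[g,h]$ is non-elliptic. You appeal to ``the unique maximal elementary subgroup containing $[g,h]$'' and to ``uniqueness of the canonical invariant set of a non-trivial non-elliptic isometry'', but nothing in the hypotheses rules out that $[g,h]$ is a nontrivial \emph{elliptic} (finite-order) element: $\Gamma$ is discrete but not assumed torsion-free, and the commutator of two hyperbolic isometries can perfectly well be elliptic. A nontrivial torsion element has no canonical fixed point at infinity or invariant axis --- its fixed-point set is a totally geodesic submanifold, possibly of positive dimension, and it lies in many maximal elementary subgroups --- so from $g^{-1}[g,h]g\in E$ one cannot conclude $g^{-1}F=F$; indeed $E$ itself may be a finite group with no distinguished subset of $\partial_{\infty}X$ to propagate. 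Your argument is complete only under the additional dichotomy ``$[g,h]$ is trivial or non-elliptic'', and the remaining torsion case needs a separate argument (or a reduction showing it cannot occur here). To be fair, the paper hides exactly this point inside its citation of \cite{Kapovich-book}, but as a self-contained proof yours does not yet cover it.
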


\begin{cor}\label{cor:dychotomy}
If $\Ga$ is discrete and $\tau\ge \la$, then  the subgroup 
$\langle g^N, h^N\rangle =\Ga_N <\Ga$ is free of rank 2 whenever one of the following holds:  

i. Either $L_\al\le 3L_1$ and 
$$
N\ge  \frac{5+ 2\delta + 3L_1}{\la}. 
$$

ii. Or $L_\al \ge 3L_1$ and 
$$
N\ge  27 + \frac{9(5+2\delta)}{L_{1}}.   
$$
\end{cor}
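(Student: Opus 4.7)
The plan is to reduce both cases to Corollary~\ref{cor:N-in}, whose hypothesis is the single inequality $N\tau \ge L_\al + 5 + 2\delta$. Thus the entire proof consists in verifying this inequality under each of the two assumptions (i) and (ii), after which the freeness of $\langle g^N, h^N\rangle$ follows at once.

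Case (i) is immediate: the hypothesis $\tau \ge \la$ combined with $L_\al \le 3L_1$ and the assumed lower bound $N \ge (5+2\delta+3L_1)/\la$ gives
\[
N\tau \ge N\la \ge 5 + 2\delta + 3L_1 \ge L_\al + 5 + 2\delta,
\]
so Corollary~\ref{cor:N-in} applies directly.

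Case (ii) is where the uniform bound $\tau \ge \la$ is too weak, and one instead feeds in the sharper estimate $\tau \ge \tfrac{1}{9}(L_\al - 2L_1)$ from the preceding corollary. (This is precisely where discreteness of $\Ga$ and the choice $\eta_0 = 0.01\,\varepsilon(n,\kappa)$ are used.) Combined with $L_\al \ge 3L_1$, which gives both $L_\al - 2L_1 \ge L_1$ and $3(L_\al - 2L_1) \ge L_\al$, a short algebraic manipulation shows that the lower bound $N \ge 27 + 9(5+2\delta)/L_1$ forces
\[
N\tau \ge 3(L_\al - 2L_1) + \frac{(5+2\delta)(L_\al - 2L_1)}{L_1} \ge L_\al + 5 + 2\delta,
\]
and Corollary~\ref{cor:N-in} again concludes.

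The only conceptual point is to set up the dichotomy $L_\al \lessgtr 3L_1$ so that in each range of $L_\al$ the available lower bound on $\tau$ (the uniform constant $\la$ in Case (i), the Margulis-based linear-in-$L_\al$ bound in Case (ii)) is strong enough to absorb $L_\al + 5 + 2\delta$ inside $N\tau$. Once this split is made, there is no substantial obstacle: all the geometric work (the ping-pong from Corollary~\ref{cor:N-in} and the Margulis-based lower bound on $\tau$) has already been done in the preceding lemmas, and the remainder is a routine chain of inequalities.
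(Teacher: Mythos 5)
Your proposal is correct and follows essentially the same route as the paper: both cases are reduced to the single inequality $N\tau \ge L_\al + 5 + 2\delta$ of Corollary~\ref{cor:N-in}, using $\tau\ge\la$ in case (i) and the Margulis-based bound $\tau\ge\frac{1}{9}(L_\al-2L_1)$ in case (ii). The only cosmetic difference is that in case (ii) the paper bounds the function $t\mapsto 9(t+5+2\delta)/(t-2L_1)$ by its maximum at $t=3L_1$, whereas you split the product $N\tau$ into two terms and bound each separately; the computations are equivalent.
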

\proof In view of Corollary \ref{cor:N-in}, it suffices to ensure that the inequality \eqref{eq:N-in} holds. 

(i) Suppose first that $L_\al\le 3L_1$, hence, $L_\beta\le 3L_1$. Then, in view of the inequality $\tau\ge \la>0$, the inequality \eqref{eq:N-in} will follow from
$$
N\ge \frac{5+ 2\delta + 3L_1}{\la}. 
$$

(ii) Suppose now that $L_\al \ge 3L_1$. The function
$$
\frac{9(t+ 5 +2\delta)}{t- 2L_1}
$$
attains its maximum on the interval $[3L_1, \infty)$ at $t=3L_1$. Therefore,
$$
\frac{9(L_\al+ 5 +2\delta)}{L_\al- 2L_1}\le 27 + \frac{9(5+2\delta)}{L_1}. 
$$
Thus, the inequality
$$
\tau\ge \frac{L_\al -2L_1}{9}
$$
implies that for any 
$$
N\ge 27 + \frac{9(5+2\delta)}{L_1},
$$
we have $N\tau\ge L_\alpha + 5+2\delta$. \qed

\medskip 
Consider now the remaining case when for $\eta_0:= \frac{1}{100}\varepsilon(n,\kappa)$, the subinterval 
$\beta_0$ is empty, i.e. $\eta >\eta_0= \frac{1}{100}\varepsilon(n,\kappa)$.  Then, as above, the length $L_\al$ of the segment $x_-x_+$ is at most $2L_1$. Therefore, similarly to the case (i) 
of Corollary \ref{cor:dychotomy}, in order for $N$ to satisfy the inequality \eqref{eq:N-in}, it suffices to get 
$$
N\ge \frac{5+ 2\delta + 3L_1}{\la}. 
$$

To conclude:

\begin{thm}\label{thm:Case1}
Suppose that $g, h$ are hyperbolic isometries of $X$ generating a discrete nonelementary subgroup, whose translation lengths are equal to some $\tau\ge \la>0$. 
Let $L_1$ be such that
$$
\sinh(L_1) \sinh\left(\frac{1}{100}\varepsilon\right)=1, 
$$
where $\varepsilon= \varepsilon(n,\kappa)$. 
Then for every
\begin{equation}\label{eq:N}
N \ge \max \left( \frac{5+ 2\delta + 3L_1}{\la},   27 + \frac{9(5+2\delta)}{L_1}  \right)
\end{equation}
the group generated by $g^N, h^N$ is free of rank 2. 
\end{thm}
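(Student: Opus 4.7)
The plan is straightforward: the theorem packages together Corollary~\ref{cor:dychotomy} and the paragraph handling the remaining case when $\beta_0$ is empty. Since Corollary~\ref{cor:N-in} already provides the ping-pong conclusion once we verify the single quantitative inequality
$$
N\tau \ge L_\alpha + 5 + 2\delta,
$$
the task reduces to showing that the lower bound on $N$ given in \eqref{eq:N} guarantees this inequality in every geometric configuration of the axes $\alpha = A_g$ and $\beta = A_h$.

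First I would split on whether $\eta := d(\alpha,\beta)$ exceeds $\eta_0 := \varepsilon(n,\kappa)/100$. If $\eta > \eta_0$, then $\beta_0 = \emptyset$, and the angle argument based on \cite[Corollary~3.7]{KL1} already recorded in the paragraph preceding the theorem forces $L_\alpha \le 2L_1$. In this regime, the first entry of the maximum in \eqref{eq:N}, combined with $\tau \ge \lambda$, yields $N\tau \ge 5 + 2\delta + 3L_1 \ge L_\alpha + 5 + 2\delta$, so Corollary~\ref{cor:N-in} applies directly.

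Next, if $\eta \le \eta_0$ so that $\beta_0$ is nonempty, I would appeal directly to Corollary~\ref{cor:dychotomy}: its two alternatives (i) and (ii) match the two entries in the maximum in \eqref{eq:N}, and together they exhaust the possibilities $L_\alpha \le 3L_1$ and $L_\alpha \ge 3L_1$. In either subcase the stated bound on $N$ is what we need.

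The real work has already been done earlier in the section: the nontrivial input is the Margulis-lemma bound $\tau \ge \tfrac{1}{9}(L_\alpha - 2L_1)$ derived via the Culler--Shalen--Morgan--Bestvina--Paulin style displacement estimate on the middle third of $\alpha_0$, together with the half-space disjointness from Lemma~\ref{lem:BCG}. Given those two ingredients, the theorem is essentially a bookkeeping exercise: rearranging $N\tau \ge L_\alpha + 5 + 2\delta$ using either $\tau \ge \lambda$ (when $L_\alpha \le 3L_1$, which includes the case $\beta_0 = \emptyset$) or $\tau \ge (L_\alpha - 2L_1)/9$ (when $L_\alpha > 3L_1$) produces exactly the two explicit lower bounds appearing inside the maximum in \eqref{eq:N}. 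I do not foresee a substantive obstacle beyond correctly bundling the three previously established cases into a single statement.
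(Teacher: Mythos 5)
Your proposal is correct and follows essentially the same route as the paper: the theorem is stated there with the preamble ``To conclude:'' precisely because it is the assembly of Corollary~\ref{cor:dychotomy} (the two subcases $L_\al\le 3L_1$ and $L_\al\ge 3L_1$ when $\beta_0\ne\emptyset$) with the preceding paragraph treating $\beta_0=\emptyset$ via $L_\al\le 2L_1$, all funneled through the single inequality \eqref{eq:N-in} of Corollary~\ref{cor:N-in}. Your bookkeeping of the three cases and of the two ingredients ($\tau\ge\la$ versus $\tau\ge(L_\al-2L_1)/9$) matches the paper's.
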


We note that proving that (some powers of) $g$ and $h$ generate a free subsemigroup, is easier, see \cite{BCG} and \cite[section 11]{BF}.

\begin{corollary}
\label{schottky group}
Given $g,h$ as in Theorem \ref{thm:Case1}, and any $N$ satisfying (\ref{eq:N}), the free group $\Ga_N=\langle g^{N}, h^{N}\rangle$ is  convex-cocompact. 

\end{corollary}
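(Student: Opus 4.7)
The plan is to show that the orbit map $\F_2 \to \Ga_N \cdot p \subset X$, from the Cayley graph of $\F_2 = \langle g^N, h^N\rangle$ (equipped with the word metric with respect to the basis $\{g^N,h^N\}$) to the orbit in $X$, is a quasi-isometric embedding, where $p$ is a basepoint in the common exterior of the four half-spaces $H(g^{\pm N}x,x)$ and $H(h^{\pm N}y,y)$; by the characterization of convex-cocompactness recalled just before Proposition \ref{prop:gf-hyp}, this will suffice.

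Following the strategy of Proposition \ref{prop:gf-hyp}, for each reduced word $w = s_k \cdots s_1 \in \F_2$ (with $s_j \in \{g^{\pm N}, h^{\pm N}\}$), I would construct a piecewise geodesic path from $p$ to $wp$ and invoke the local-to-global principle (Proposition \ref{qua}). The natural candidate is the broken geodesic $p = q_0, q_1, \ldots, q_k = wp$ through the orbit points $q_j := s_j s_{j-1} \cdots s_1 p$. Each segment $q_{j-1}q_j$ has length $d(q_{j-1}, s_j q_{j-1}) \ge \tau(s_j) = N\tau \ge N\la$; under the bound (\ref{eq:N}) (possibly after enlarging $\mathcal{L}$), this exceeds the threshold $L(\varepsilon(n,\kappa)/10)$ of Proposition \ref{qua}.

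The crux is verifying $\angle q_{j-1} q_j q_{j+1} \ge \pi/2$ at each breakpoint. For this, I would use the pairwise disjointness of the four closed half-spaces established in the proof of Corollary \ref{cor:N-in}: by iterated ping-pong, $q_{j-1}$ and $q_{j+1}$ lie in distinct translated half-spaces (each obtained by applying a prefix of $w$ to one of the original four), and these translates remain pairwise disjoint. Combined with the $\mathrm{CAT}(-1)$ right-angled triangle estimate of Lemma \ref{lem:RA-triangle} and Corollary \ref{cor:2bisectors}, this should yield the required angle bound.

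Applying Proposition \ref{qua}, the broken geodesic is a $(\la',\al')$-quasi-geodesic, giving $d(p,wp) \ge (kN\tau)/\la' - \al'$; combined with the trivial upper bound $d(p,wp) \le k \cdot \max(d(p, g^N p), d(p, h^N p))$, this yields the quasi-isometric embedding, and hence convex-cocompactness of $\Ga_N$. The main obstacle I anticipate is the rigorous verification of the angle condition: tracking how the nested disjoint half-spaces translate under arbitrary reduced words and converting the set-theoretic disjointness into a quantitative angle estimate requires careful bookkeeping in the $\mathrm{CAT}(-1)$ setting.
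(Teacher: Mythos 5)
Your proposal takes a genuinely different route from the paper, but it has a real gap at exactly the point you flag as the crux: the angle condition at the orbit points is false in general, and disjointness of the translated half-spaces cannot rescue it, since two disjoint convex sets can subtend an arbitrarily small angle at a point far from both. Concretely, for the subword $h^Nh^N$ the relevant angle is (after translating the vertex back to the basepoint) $\angle\left(h^{-N}p,\,p,\,h^{N}p\right)$; if $d(p,A_h)=D$ is large, both geodesics $p\,h^{\pm N}p$ fellow-travel the perpendicular from $p$ to $A_h$ for length roughly $D$, so this angle tends to $0$ as $D\to\infty$. In Case 1 nothing bounds the distance between the two axes: when $\eta=d(\al,\beta)$ is large, $L_\al$ is small, so \eqref{eq:N} only forces $N\tau\gtrsim 5+2\delta$, and no point of $X$ (in particular no point of the common exterior of the four half-spaces) is within $N\tau/2$ of both axes. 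Hence no choice of basepoint makes all the angles $\ge\pi/2$, and Lemma \ref{lem:RA-triangle} and Corollary \ref{cor:2bisectors} do not supply the missing estimate (they control angles at points on the bisectors or on a common geodesic, not at orbit points). Note that Proposition \ref{prop:gf-hyp}, which you model your argument on, gets its right angles from \eqref{eq:angle}, i.e.\ from the fact that the connecting segments there are the \emph{shortest} segments between the convex sets $\Hull(\TT(g))$ and $\Hull(\TT(h))$; your path through orbit points has no analogue of this. A secondary issue: \eqref{eq:N} gives $N\tau\ge L_\al+5+2\delta$, not $N\tau\ge L(\varepsilon/10)$, and the corollary is asserted for \emph{every} $N$ satisfying \eqref{eq:N}, so you are not free to enlarge $N$.

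The paper's own proof avoids all quantitative estimates: with $\mathcal{U}^{\pm}=H(g^{\pm N}x,x)$ and $\mathcal{V}^{\pm}=H(h^{\pm N}y,y)$, the closure in $\bar X$ of $X\setminus(\mathcal{U}^-\cup\mathcal{U}^+\cup\mathcal{V}^-\cup\mathcal{V}^+)$ is a \emph{compact} fundamental domain for the action of $\Ga_N$ on $\bar X\setminus\Lambda$ (a Klein--Maskit combination argument, cf.\ \cite{Maskit}), so $(\bar X\setminus\Lambda)/\Ga_N$ is compact and Bowditch's characterization \cite{Bo2} gives convex-cocompactness. If you wish to keep the orbit-map route (which the paper's remark after Corollary \ref{schottky group} also sketches), the correct input is not an angle condition but the observation that the geodesic from $p$ to $wp$ must cross $|w|$ nested bisectors, with consecutive nested bisectors a definite distance apart (this is where Corollary \ref{cor:2bisectors} genuinely enters); that yields $d(p,wp)\ge c|w|-c'$ directly, without Proposition \ref{qua}.
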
 

\begin{proof}
Let $\mathcal{U}^{\pm }=H(g^{\pm N}x, x)$ and $\mathcal{V}^{\pm }=H(h^{\pm N}y, y)$. Observe that 
$$
g^{\pm N}(X\setminus \mathcal{U}^{\mp })\subset \mathcal{U}^{\pm }$$ 
and 
$$
h^{\pm N}(X\setminus \mathcal{V}^{\mp })\subset \mathcal{V}^{\pm }.
$$
We let $\mathfrak{D}_{g^N}, \mathfrak{D}_{h^N}$ denote the closures in $\bar{X}$ of the domains 
$$
X\setminus (\mathcal{U}^{-}\cup \mathcal{U}^+), \quad X\setminus (\mathcal{V}^{-}\cup \mathcal{V}^+)
$$
respectively and set 
\[
\mathfrak{D} = \mathfrak{D}_{g^N} \cap \mathfrak{D}_{h^N}.
\]
It is easy to see (cf. \cite{Maskit}) that this  intersection is a fundamental domain for the 
action of $\Ga_N$ on the complement $\bar{X}\setminus \Lambda$ to its limit set $\Lambda$. Therefore,  
$(\bar{X}\setminus \Lambda)/\Ga_N$ is compact. 
Hence, $\Ga_N$ is convex-cocompact (see \cite{Bo2}).
\end{proof}

\begin{rem}
It is also not hard to see directly that  the orbit maps $\Ga_N\to \Ga_N x\subset X$ are quasiisometric embeddings by following the 
proofs in \cite[section 7]{KL1} and counting the number of bisectors crossed by geodesics connecting points in $\Ga x$.  
\end{rem}

\section{Case 2: Displacement bounded above} \label{sec:Case2} 

The strategy in this case is to find an element $g'$ conjugate to $g$ (by some uniformly bounded power of $f$) 
   such that the Margulis regions of $g, g'$ are sufficiently far apart, i.e. are at distance $\ge L$, where $L$ is given by the local-to-global principle for piecewise-geodesic paths in $X$, 
   see Proposition \ref{free group 1}. 

\begin{proposition}
\label{produce large distance}
There exists a function 
$$
{\mathfrak k}: [0,\infty) \times (0, \varepsilon] \rightarrow \mathbb{N}$$ 
for $0< \varepsilon\le \varepsilon(n,\kappa)$
with the following property: Let $g_{1}, \cdots , g_{k}$ be nonelliptic 
isometries of the same type (hyperbolic or parabolic) with translation lengths $\leq \varepsilon/10$
and 
$$
k\ge {\mathfrak k}(L,\varepsilon). 
$$
Suppose that $\langle g_{i}, g_{j} \rangle$ are nonelementary discrete subgroup for all $i\neq j$. Then,   
there exists a pair of indices $i, j \in\{1,\dots,k\}$, $i\ne j$ such that 
$$d(\Hull(\TT(g_{i})), \Hull(\TT(g_{j})))>L. $$  
\end{proposition}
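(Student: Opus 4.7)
The plan is to proceed by contradiction: assume $d(\Hull(\TT(g_i)), \Hull(\TT(g_j))) \le L$ for every pair $i \ne j$, and derive an upper bound on $k$ depending only on $L$ and $\varepsilon$.

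The first step is to upgrade the pairwise nonelementary-discrete hypothesis into a uniform lower bound on the distance between distinct Margulis regions. By Corollary \ref{cor:hull} the assumption gives $d(\TT(g_i),\TT(g_j)) \le L + 2\mathfrak{q}$. For the lower bound, suppose $v_i \in \TT(g_i)$ and $v_j \in \TT(g_j)$ satisfy $d(v_i,v_j) < \varepsilon_0 := (\varepsilon(n,\kappa)-\varepsilon)/2$. Then at $v_i$ some power $g_i^{k_i}$ displaces by at most $\varepsilon$, and some power $g_j^{k_j}$ displaces by at most $\varepsilon + 2\varepsilon_0 = \varepsilon(n,\kappa)$. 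Applying the Margulis lemma to the discrete group $\langle g_i,g_j\rangle$ at $v_i$ produces a virtually nilpotent subgroup containing $g_i^{k_i}$ and $g_j^{k_j}$; in a negatively curved Hadamard manifold this subgroup must be elementary, and since $g_i$ and $g_i^{k_i}$ share the same fixed points at infinity (likewise for $g_j$), the group $\langle g_i,g_j\rangle$ itself is elementary, contradicting the hypothesis. Hence $d(\TT(g_i),\TT(g_j)) \ge \varepsilon_0$ whenever $\varepsilon < \varepsilon(n,\kappa)$.

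Second, I would localize the configuration using the $\langle g_1\rangle$-invariance of $\Hull(\TT(g_1))$ from Lemma \ref{lem3.1}. In the hyperbolic case, $\Hull(\TT(g_1))/\langle g_1\rangle$ is compact of diameter bounded by a function $D_1(\varepsilon)$, so pick a fundamental slab $F_1$; in the parabolic case, $\Hull(\TT(g_1))$ is a neighborhood of a horoball, and one fixes a transverse fundamental domain for the almost-Euclidean $\langle g_1\rangle$-action on horospheres. For each $i \ne 1$, pick witnesses $u_i \in \TT(g_1),\, v_i \in \TT(g_i)$ with $d(u_i,v_i) \le L + 2\mathfrak{q}$, and translate by a power $g_1^{n_i}$ so that $g_1^{n_i}(u_i)$ lies in $F_1$. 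The translates $g_1^{n_i}(v_i)$ all lie in the bounded set $\bar{N}_{L+2\mathfrak{q}}(F_1)$, whose volume is controlled by $L$ and $\varepsilon$. Covering this set by balls of radius $\varepsilon_0/4$, if $k - 1$ exceeds the covering number one obtains indices $i \ne j$ and an integer $m = n_j - n_i$ with $d(v_i, g_1^m(v_j)) < \varepsilon_0/2$.

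The main obstacle is that this pigeonhole output does not immediately contradict Step 1 when $m \ne 0$. The translated region $g_1^m(\TT(g_j)) = \TT(g_1^m g_j g_1^{-m})$ belongs to a \emph{conjugated} isometry $\hat g_j$, and the pair $\langle g_i, \hat g_j\rangle$ lives inside $\langle g_1, g_i, g_j\rangle$, which is not assumed discrete; the hypothesis gives discreteness only for the original pair $\langle g_i, g_j\rangle$. To close the argument I expect one must either (i) refine the pigeonhole into a two-level scheme that first classifies witnesses by their translation label $n_i$ and then applies a spatial pigeonhole within each class, so that $m$ can be absorbed into a common base conjugation and the pair is left invariant up to conjugacy; or (ii) bypass pigeonhole entirely via a direct volume-packing argument for the disjoint, pairwise $\varepsilon_0$-separated sets $\TT(g_i) \cap \bar{N}_{L+2\mathfrak{q}}(\TT(g_1))$ restricted to a single fundamental slab of $\Hull(\TT(g_1))$, which should yield the desired bound $\mathfrak{k}(L,\varepsilon)$ without any reliance on conjugation.
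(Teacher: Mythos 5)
Your overall strategy (contradiction plus a volume/packing bound) is the right one, but as written the argument has a genuine gap, and it is concentrated exactly where you place the weight: the localization step. You propose to quotient by $\langle g_1\rangle$ and work in a fundamental slab $F_1$ of $\Hull(\TT(g_1))$ whose size is ``bounded by a function $D_1(\varepsilon)$''. That is false: for a hyperbolic $g_1$ with no rotational part and translation length $\tau(g_1)\to 0$, the tube $T_\varepsilon(g_1)$ has radius about $\cosh^{-1}(\varepsilon/\tau(g_1))$ around the axis, so the transverse diameter of $\TT(g_1)$ --- and hence of any fundamental domain for $\langle g_1\rangle$ acting on it --- is unbounded in terms of $\varepsilon$, $n$, $\kappa$ alone. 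Consequently $\bar N_{L+2\mathfrak{q}}(F_1)$ has no volume bound depending only on $L$ and $\varepsilon$, and the covering-number pigeonhole does not produce a constant $\mathfrak{k}(L,\varepsilon)$. This also undermines your fallback option (ii), which still packs inside a single fundamental slab. The conjugation issue you flag honestly (the pair $\langle g_i, g_1^m g_j g_1^{-m}\rangle$ need not be discrete) is a second real obstruction that neither of your two proposed fixes actually resolves in the text. A smaller point: your separation constant $\varepsilon_0=(\varepsilon(n,\kappa)-\varepsilon)/2$ vanishes at $\varepsilon=\varepsilon(n,\kappa)$, which the proposition permits; in fact no quantitative separation between the regions is needed, only their disjointness, which follows directly from the Margulis lemma applied to each nonelementary discrete pair.

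The paper closes the gap with two tools you do not invoke. First, a Helly-type statement for convex sets in $X$ (\cite[Proposition 8.2]{KL1}): since the convex sets $\bar N_{L/2}(\Hull(\TT(g_i)))$ pairwise intersect, there is a \emph{single} point $x$ with $d(x,\TT(g_i))\le n\delta+L/2+\mathfrak{q}$ for \emph{all} $i$, which localizes the whole configuration in a ball of radius controlled by $L$ and $\varepsilon$ without ever choosing fundamental domains or conjugating. Second, the displacement estimate of Corollary \ref{margulis distance}: from a point $x_i\in\TT(g_i)$ with $d(x_i,g_i^{p_i}x_i)=\varepsilon$ one moves a bounded distance $\mathfrak{r}(\varepsilon)$ to a point $y_i$ with $d(y_i,g_i^{p_i}y_i)=\varepsilon/3$, so that $B(y_i,\varepsilon/3)\subset\TT(g_i)$. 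These balls are pairwise disjoint (because the regions $\TT(g_i)$ are), all lie in a ball of radius $n\delta+L/2+\mathfrak{q}+\mathfrak{r}(\varepsilon)+\varepsilon/3$, and a volume comparison bounds $k$. If you want to salvage your write-up, replace your Steps 1--2 by these two ingredients; the rest of your packing intuition then goes through.
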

\proof If all the isometries $g_i$ are parabolic, then the proposition is established in 
\cite[Proposition 8.3]{KL1}. Therefore, we only consider the case when all these isometries are hyperbolic. Our proof follows closely the 
proof of  \cite[Proposition 8.3]{KL1}. 

Since for all $i\ne j$ the subgroup $\langle g_{i}, g_{j} \rangle$ is a discrete and nonelementary, and 
$\varepsilon\le \varepsilon(n,\kappa)$, we have   
$$
\mathcal{T}_{\varepsilon}(g_{i})\cap \mathcal{T}_{\varepsilon}(g_{j})=\emptyset.
$$ 
Given $L>0$, suppose
that
$$
d(\textup{Hull}(\mathcal{T}_{\varepsilon}(g_{i})), \textup{Hull}(\mathcal{T}_{\varepsilon}(g_{j})))\leq L, \quad \forall i,j\in\{1,\dots,k\}
$$ 
Our goal is to get a uniform upper bound of $k$.

Consider the $L/2$-neighborhoods $\bar{N}_{L/2}(\textup{Hull}(\mathcal{T}_{\varepsilon}(g_{i})))$. They are convex in $X$, and have nonempty pairwise intersections. Thus, by \cite[Proposition 8.2]{KL1}, there exists a point $x\in X$ such that 
$$d(x, \mathcal{T}_{\varepsilon}(g_{i}))\leq R_{1}: = n\delta +L/2+\mathfrak{q}, \quad i=1, \dots, k, $$
where $\delta$ is the hyperbolicity constant of $X$ and ${\mathfrak q}$ is as in Proposition \ref{prop:starhull}. 
Then 
$$\mathcal{T}_{\varepsilon}(g_{i})\cap B(x, R_{1})\neq \emptyset , \quad i=1, \dots, k. $$

For each $i = 1, \dots, k$ take a point $x_{i}\in \mathcal{T}_{\varepsilon}(g_{i})\cap B(x, R_{1})$ satisfying  $d(x_{i}, g_{i}^{p_i}(x_{i}))=\varepsilon$ for some $0<p_i \leq m_{g_{i}}$. Since the translation lengths of the elements $g_{i}^{p_i}$ are $\leq \varepsilon/10$, by Corollary \ref{margulis distance} there exist points $y_{i}\in X$ such that 
$$
d(y_{i}, g_{i}^{p_i}(y_{i}))= \varepsilon/3, \quad d(x_{i}, y_{i})\leq \mathfrak{r}(\varepsilon).
$$
Consider the $\varepsilon/3$-balls $B(y_{i}, \varepsilon/3)$. Then  $B(y_{i}, \varepsilon/3)\subset \mathcal{T}_{\varepsilon}(g_{i})$ since 
$$
d(z, g_{i}^{p_i}(z))\leq d(z, y_{i})+d(y_{i}, g_{i}^{p_i}(y_{i}))+d(g_{i}^{p_i}(y_{i}), g_{i}^{p_i}(z))\leq \varepsilon
$$ 
for any point $z\in B(y_{i}, \varepsilon/3)$. Thus, the balls $B(y_{i}, \varepsilon/3)$ 
are pairwise disjoint. Observe that $B(y_{i}, \varepsilon/3)\subset B(x, R_{2})$ where $R_{2}=R_{1}+\mathfrak{r}(\varepsilon)+\varepsilon/3$. 

Let $V(r, n)$ denote the volume of the $r$-ball in $\mathbb{H}^{n}$. Then for each $i$, $\mathrm{Vol}(B(y_{i}, \varepsilon/3))$ 
is at least $V(\varepsilon/3, n)$, see \cite[Proposition 1.1.12]{Bo2}. Moreover, the volume of $B(x, R_{2})$ is at most $V(\kappa R_{2}, n)/ \kappa^{n}$, see \cite[Proposition 1.2.4]{Bo2}. Let 
$$
{\mathfrak k}(L, \varepsilon):=\dfrac{V(\kappa R_{2}, n)/ \kappa^{n}}{V(\varepsilon/3, n)}+1.$$
Then $k< {\mathfrak k}(L, \varepsilon)$, because otherwise we would obtain
 $$
\mathrm{Vol}\left( \bigcup_{i=1}^k B(y_{i}, \varepsilon/3)\right) > \mathrm{Vol}(B(x, R_{2})),
 $$
 where the union of the balls on the left side of this inequality is contained in 
 $B(x, R_{2})$, which is a contradiction. 
 
  Therefore, whenever $k\geq {\mathfrak k}(L, \varepsilon)$, there exist a pair of indices $i, j$ such that 
  \[
  d\left(\Hull(\TT(g_{i})), \Hull(\TT(g_{j}))\right)>L. \qedhere
  \]

\begin{rem}
Proposition \ref{produce large distance} also holds for 
isometries of mixed types (i.e. some $g_i$'s are parabolic and some are hyperbolic). The proof is similar to the one given above. \end{rem}

\begin{theorem}\label{thm:Case2}
For every nonelementary discrete  subgroup $\Ga= \langle g, h \rangle <\Isom(X)$ with 
$g, h$ nonelliptic isometries  satisfying  
$$
\tau(g)\leq \varepsilon/10\le \varepsilon(n,\kappa)/10, 
$$ 
there exists $i$, $ 1 \le i \le  \mathfrak{k}(L(\varepsilon/10), \varepsilon)$, such that  $\langle g, h^{i}gh^{-i}\rangle$ 
is a free subgroup of rank 2, where ${\mathfrak k}$ is the function given by Proposition \ref{produce large distance} and $L(\varepsilon/10)$ is the constant in Proposition \ref{qua}.

\end{theorem}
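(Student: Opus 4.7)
The plan is to apply the pigeonhole-type Proposition \ref{produce large distance} to a family of conjugates of $g$ by powers of $h$, and then invoke the ping-pong Proposition \ref{free group 1} on the pair that it produces.

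First, I would set $g_i := h^{i} g h^{-i}$ for $i = 0, 1, \ldots, k$, with $k = \mathfrak{k}(L(\varepsilon/10),\varepsilon)$. Each $g_i$ is conjugate to $g$, hence is nonelliptic of the same type as $g$, with $\tau(g_i) = \tau(g) \le \varepsilon/10$. Every subgroup $\langle g_i, g_j\rangle$ is discrete, being contained in the discrete group $\Gamma$. The main technical step is to verify that $\langle g_i, g_j\rangle$ is nonelementary for all $i\ne j$; after conjugation by $h^{-i}$ this reduces to showing that $\langle g, h^{m}gh^{-m}\rangle$ is nonelementary for every $m\ne 0$. I would argue by contradiction, splitting on the type of $g$: if $g$ is parabolic with unique fixed point $\xi \in \geo X$, any common fixed point of $g$ and $h^{m}gh^{-m}$ must equal $\xi$, which forces $h^m(\xi) = \xi$; since $h$ is nonelliptic, its fixed-point set at infinity agrees with that of $h^m$, so $h$ fixes $\xi$, making $\Gamma$ elementary, a contradiction. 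If $g$ is hyperbolic, a discrete subgroup generated by two hyperbolics is elementary only when the two axes coincide, so $h^m$ preserves $A_g$ setwise; by the same power-propagation principle, $h$ itself preserves $A_g$, so $\Gamma$ does too, again a contradiction.

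Next, I would feed the family $\{g_0, g_1,\ldots, g_k\}$ into Proposition \ref{produce large distance}. It produces indices $0 \le i_0 < j_0 \le k$ with
\[
d\bigl(\Hull(\TT(g_{i_0})),\, \Hull(\TT(g_{j_0}))\bigr) > L(\varepsilon/10).
\]
Applying the isometry $h^{-i_0}$ preserves this distance, and by the equivariance \eqref{eq3.1} together with the definition of $\TT$, it carries $g_{i_0}$ to $g$ and $g_{j_0}$ to $g_{j_0 - i_0} = h^{\,j_0 - i_0} g h^{-(j_0 - i_0)}$. Setting $i := j_0 - i_0$, which satisfies $1 \le i \le \mathfrak{k}(L(\varepsilon/10),\varepsilon)$, we obtain
\[
d\bigl(\Hull(\TT(g)),\, \Hull(\TT(h^{i}gh^{-i}))\bigr) > L(\varepsilon/10).
\]
Therefore Proposition \ref{free group 1} applies to the pair $(g,\, h^{i}gh^{-i})$ of nonelliptic isometries of equal translation length $\le \varepsilon/10$ and yields that $\langle g, h^{i}gh^{-i}\rangle$ is free of rank two.

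The principal obstacle is the nonelementarity verification in the first step, which hinges on the fact that in a negatively pinched Hadamard manifold the fixed-point set at infinity of a nonelliptic isometry is determined by that of any of its nonzero powers, allowing one to pass from $h^m$ back to $h$ and then contradict the nonelementarity of $\Gamma$. The remainder of the argument is essentially bookkeeping: the family of conjugates is available for free, Proposition \ref{produce large distance} supplies the far-apart pair, and equivariance moves the estimate to a pair of the form $(g, h^{i}gh^{-i})$ so that Proposition \ref{free group 1} can be invoked verbatim.
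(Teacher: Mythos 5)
Your proposal is correct and follows essentially the same route as the paper: form the conjugates $h^{i}gh^{-i}$, check pairwise nonelementarity by the same case analysis on the type of $g$ (distinct parabolic fixed points, resp.\ distinct axes, forcing $h$ to fix a point or a geodesic otherwise), apply Proposition \ref{produce large distance} to extract a far-apart pair, translate by a power of $h$ to put it in the form $(g, h^{i}gh^{-i})$, and invoke Proposition \ref{free group 1}. The only differences are cosmetic (indexing the family from $0$ and spelling out the power-propagation of fixed-point sets, which the paper uses implicitly).
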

\begin{proof} Consider isometries $g_i:=h^{i}gh^{-i}$, $i\ge 1$. We first claim that no 
pair $g_i, g_j$, $i\ne j$, generates an elementary subgroup of $\Isom(X)$. There are two cases to consider:

\medskip
(i) Suppose that $g$ is parabolic   with the  fixed point $p\in \geo X$. We claim that for  all  
$i\ne j$,  $h^{i}(p)\neq h^{j}(p)$. Otherwise, $h^{j-i}(p)=p$, and $p$ would be a fixed point of $h$. But this would imply that $\Ga$ is elementary, contradicting our hypothesis. 

\medskip 
(ii) The proof in the case when $g$ is hyperbolic is similar. The axis of $g_i$ equals 
$h^{i}(A_{g})$. If hyperbolic isometries $g_i, g_j$, $i\ne j$, generate a discrete elementary subgroup of $\Ga$, then they have to share the axis, and we would obtain
$h^{i}(A_{g})=h^{j}(A_{g})$. Then $h^{j-i}(A_{g})=A_{g}$. Since $h^{j-i}$ is nonelliptic, it 
cannot swap the fixed points of $g$, hence, it fixes both of these points. Therefore, $g, h$ 
have common axis, contradicting  the hypothesis that $\Ga$ is nonelementary.

\medskip
All the isometries $g_i$ have equal translation lengths $\leq \varepsilon/10$. Therefore, 
by Proposition \ref{produce large distance}, there exists a pair of natural numbers 
$i, j\leq {\mathfrak k}(L(\varepsilon/10), \varepsilon)$ such that 
$$
d(\Hull(\TT(h^{i}gh^{-i})), \Hull(\TT(h^{j}gh^{-j})))> L(\varepsilon/10)$$
where ${\mathfrak k}(L(\varepsilon/10), \varepsilon)$ is the function as in Proposition \ref{produce large distance}. 
It follows that 
$$
d(\Hull(\TT(h^{j-i}gh^{i-j})), \Hull(\TT(g)))> L(\varepsilon/10).
$$
Setting  $f:=h^{j-i}gh^{i-j}$, and applying Proposition \ref{free group 1} to the isometries 
$f, g$, we conclude that the subgroup  $\langle f, g \rangle <\Ga$ is  free of rank $2$.
The word length of $f$ is at most 
\[
2|j-i|+1 \leq 2{\mathfrak k}(L(\varepsilon/10), \varepsilon) +1. \qedhere
\]
\end{proof}

\section{Conclusion}

Now we are in a position to complete the proof of Theorem \ref{thm:main}. 

\begin{proof}[Proof of Theorem \ref{thm:main}]
We set $\la:= \varepsilon/10$, where $\varepsilon = \varepsilon(n,\kappa)$ is the Margulis constant. 
Let $g, h$ be non-elliptic isometries of $X$ generating a discrete nonelementary subgroup of $\Isom(X)$, such that $\tau(g)=\tau(h)=\tau$.

If $\tau\ge \la$, then by Theorem \ref{thm:Case1}, the subgroup 
$\Ga_N<\Ga$ generated by $g^N, h^N$ is free of rank $2$, where 
$$
N :=  \left\lceil \max\left( \frac{5+ 2\delta + 3L_1}{\lambda},  
 27 + \frac{9(5+2\delta)}{L_1} \right) \right\rceil. 
$$
Here $\delta=\cosh^{-1}(\sqrt{2})$, and 
$$
L_1=\sinh^{-1}\left(  \frac{1}{\sinh (\varepsilon/100)} \right).
$$

If $\tau\le \la$, then by Theorem \ref{thm:Case2} there exists $i\in [1, \mathfrak{k}(L(\lambda), \varepsilon)]$ such that 
 $\langle g, h^i g h^{-i}\rangle$ is free of rank $2$ where $\mathfrak{k}(L(\lambda), \varepsilon)$ is a constant as in Theorem \ref{thm:Case2}.
\end{proof}

\end{document}